\begin{document}

\newcommand{\REMARK}[1]{\marginpar{\tiny #1}}
\newtheorem{thm}{Theorem}[section]
\newtheorem{defin}[thm]{Definition}
\newtheorem{Remark}[thm]{Remark}
\numberwithin{equation}{subsection}
\newtheorem{coro}[thm]{Corollary}
\newtheorem{prop}[thm]{Proposition}
\newtheorem{lemma}[thm]{Lemma}

\newtheorem{notas}[thm]{Notations}
\newtheorem{nota}[thm]{Notation}
\newtheorem*{thm*}{Theorem}
\newtheorem*{prop*}{Proposition}
\newtheorem{conje}[thm]{Conjecture}
\newtheorem*{conj*}{Conjecture}
\newtheorem{hyp}[thm]{Hypothesis}

\def\Tm{{\mathbb T}}
\def\Um{{\mathbb U}}
\def\Am{{\mathbb A}}
\def\Fm{{\mathbb F}}
\def\Mm{{\mathbb M}}
\def\Nm{{\mathbb N}}
\def\Pm{{\mathbb P}}
\def\Qm{{\mathbb Q}}
\def\Zm{{\mathbb Z}}
\def\Dm{{\mathbb D}}
\def\Cm{{\mathbb C}}
\def\Rm{{\mathbb R}}
\def\Gm{{\mathbb G}}
\def\Lm{{\mathbb L}}
\def\Km{{\mathbb K}}
\def\Om{{\mathbb O}}
\def\Em{{\mathbb E}}
\def\Xm{{\mathbb X}}

\def\BC{{\mathcal B}}
\def\QC{{\mathcal Q}}
\def\TC{{\mathcal T}}
\def\ZC{{\mathcal Z}}
\def\AC{{\mathcal A}}
\def\CC{{\mathcal C}}
\def\DC{{\mathcal D}}
\def\EC{{\mathcal E}}
\def\FC{{\mathcal F}}
\def\GC{{\mathcal G}}
\def\HC{{\mathcal H}}
\def\IC{{\mathcal I}}
\def\JC{{\mathcal J}}
\def\KC{{\mathcal K}}
\def\LC{{\mathcal L}}
\def\MC{{\mathcal M}}
\def\NC{{\mathcal N}}
\def\OC{{\mathcal O}}
\def\PC{{\mathcal P}}
\def\UC{{\mathcal U}}
\def\VC{{\mathcal V}}
\def\AC{{\mathcal A}}
\def\SC{{\mathcal S}}
\def\RC{{\mathcal R}}

\def\BF{{\mathfrak B}}
\def\AF{{\mathfrak A}}
\def\GF{{\mathfrak G}}
\def\EF{{\mathfrak E}}
\def\CF{{\mathfrak C}}
\def\DF{{\mathfrak D}}
\def\JF{{\mathfrak J}}
\def\LF{{\mathfrak L}}
\def\MF{{\mathfrak M}}
\def\NF{{\mathfrak N}}
\def\XF{{\mathfrak X}}
\def\UF{{\mathfrak U}}
\def\KF{{\mathfrak K}}
\def\FF{{\mathfrak F}}

\def \longmapright#1{\smash{\mathop{\longrightarrow}\limits^{#1}}}
\def \mapright#1{\smash{\mathop{\rightarrow}\limits^{#1}}}
\def \lexp#1#2{\kern \scriptspace \vphantom{#2}^{#1}\kern-\scriptspace#2}
\def \linf#1#2{\kern \scriptspace \vphantom{#2}_{#1}\kern-\scriptspace#2}
\def \linexp#1#2#3 {\kern \scriptspace{#3}_{#1}^{#2} \kern-\scriptspace #3}

\def \Sel {{\mathop{\mathrm{Sel}}\nolimits}}
\def \Ext{\mathop{\mathrm{Ext}}\nolimits}
\def \ad{\mathop{\mathrm{ad}}\nolimits}
\def \sh{\mathop{\mathrm{Sh}}\nolimits}
\def \irr{\mathop{\mathrm{Irr}}\nolimits}
\def \FH{\mathop{\mathrm{FH}}\nolimits}
\def \FPH{\mathop{\mathrm{FPH}}\nolimits}
\def \coh{\mathop{\mathrm{Coh}}\nolimits}
\def \res{\mathop{\mathrm{Res}}\nolimits}
\def \op{\mathop{\mathrm{op}}\nolimits}
\def \rec {\mathop{\mathrm{rec}}\nolimits}
\def \art{\mathop{\mathrm{Art}}\nolimits}
\def \vol {\mathop{\mathrm{vol}}\nolimits}
\def \cusp {\mathop{\mathrm{Cusp}}\nolimits}
\def \scusp {\mathop{\mathrm{Scusp}}\nolimits}
\def \Iw {\mathop{\mathrm{Iw}}\nolimits}
\def \JL {\mathop{\mathrm{JL}}\nolimits}
\def \speh {\mathop{\mathrm{Speh}}\nolimits}
\def \isom {\mathop{\mathrm{Isom}}\nolimits}
\def \Vect {\mathop{\mathrm{Vect}}\nolimits}
\def \groth {\mathop{\mathrm{Groth}}\nolimits}
\def \hom {\mathop{\mathrm{Hom}}\nolimits}
\def \deg {\mathop{\mathrm{deg}}\nolimits}
\def \val {\mathop{\mathrm{val}}\nolimits}
\def \det {\mathop{\mathrm{det}}\nolimits}
\def \rep {\mathop{\mathrm{Rep}}\nolimits}
\def \spec {\mathop{\mathrm{Spec}}\nolimits}
\def \fr {\mathop{\mathrm{Fr}}\nolimits}
\def \frob {\mathop{\mathrm{Frob}}\nolimits}
\def \ker {\mathop{\mathrm{Ker}}\nolimits}
\def \im {\mathop{\mathrm{Im}}\nolimits}
\def \Red {\mathop{\mathrm{Red}}\nolimits}
\def \red {\mathop{\mathrm{red}}\nolimits}
\def \aut {\mathop{\mathrm{Aut}}\nolimits}
\def \diag {\mathop{\mathrm{diag}}\nolimits}
\def \spf {\mathop{\mathrm{Spf}}\nolimits}
\def \Def {\mathop{\mathrm{Def}}\nolimits}
\def \twist {\mathop{\mathrm{Twist}}\nolimits}
\def \supp {\mathop{\mathrm{Supp}}\nolimits}
\def \Id {{\mathop{\mathrm{Id}}\nolimits}}
\def \lie {{\mathop{\mathrm{Lie~}}\nolimits}}
\def \Ind{\mathop{\mathrm{Ind}}\nolimits}
\def \ind {\mathop{\mathrm{ind}}\nolimits}
\def \bad {\mathop{\mathrm{Bad}}\nolimits}
\def \top {\mathop{\mathrm{Top}}\nolimits}
\def \ker {\mathop{\mathrm{Ker}}\nolimits}
\def \coker {\mathop{\mathrm{Coker}}\nolimits}
\def \gal {{\mathop{\mathrm{Gal}}\nolimits}}
\def \Nr {{\mathop{\mathrm{Nr}}\nolimits}}
\def \rn {{\mathop{\mathrm{rn}}\nolimits}}
\def \tr {{\mathop{\mathrm{Tr~}}\nolimits}}
\def \Sp {{\mathop{\mathrm{Sp}}\nolimits}}
\def \st {{\mathop{\mathrm{St}}\nolimits}}
\def \sp{{\mathop{\mathrm{Sp}}\nolimits}}
\def \perv{\mathop{\mathrm{Perv}}\nolimits}
\def \tor {{\mathop{\mathrm{Tor}}\nolimits}}
\def \gr {{\mathop{\mathrm{gr}}\nolimits}}
\def \nilp {{\mathop{\mathrm{Nilp}}\nolimits}}
\def \obj {{\mathop{\mathrm{Obj}}\nolimits}}
\def \spl {{\mathop{\mathrm{Spl}}\nolimits}}
\def \unr {{\mathop{\mathrm{Unr}}\nolimits}}
\def \alg {{\mathop{\mathrm{Alg}}\nolimits}}
\def \grr {{\mathop{\mathrm{grr}}\nolimits}}
\def \cogr {{\mathop{\mathrm{cogr}}\nolimits}}
\def \coFil {{\mathop{\mathrm{coFil}}\nolimits}}

\def \rem{{\noindent\textit{Remark.~}}}
\def \rems{{\noindent\textit{Remarks:~}}}
\def \ext {{\mathop{\mathrm{Ext}}\nolimits}}
\def \End {{\mathop{\mathrm{End}}\nolimits}}

\def\semi{\mathrel{>\!\!\!\triangleleft}}
\let \DS=\displaystyle
\def\HT{{\mathop{\mathcal{HT}}\nolimits}}

\def \hi{\HC}
\newcommand*{\tarrow}{\relbar\joinrel\mid\joinrel\twoheadrightarrow}
\newcommand*{\harrow}{\lhook\joinrel\relbar\joinrel\mid\joinrel\rightarrow}
\newcommand*{\rarrow}{\relbar\joinrel\mid\joinrel\rightarrow}
\def \coim {{\mathop{\mathrm{Coim}}\nolimits}}
\def \can {{\mathop{\mathrm{can}}\nolimits}}
\def\LFF{{\mathscr L}}

\setcounter{secnumdepth}{3} \setcounter{tocdepth}{3}

\def \Fil{\mathop{\mathrm{Fil}}\nolimits}
\def \CoFil{\mathop{\mathrm{CoFil}}\nolimits}
\def \Fill{\mathop{\mathrm{Fill}}\nolimits}
\def \CoFill{\mathop{\mathrm{CoFill}}\nolimits}
\def\SF{{\mathfrak S}}
\def\PF{{\mathfrak P}}
\def \EFil{\mathop{\mathrm{EFil}}\nolimits}
\def \EFill{\mathop{\mathrm{EFill}}\nolimits}
\def \FP{\mathop{\mathrm{FP}}\nolimits}

\let \longto=\longrightarrow
\let \oo=\infty

\let \d=\delta
\let \k=\kappa

\renewcommand{\theequation}{\arabic{section}.\arabic{thm}}
\newcommand{\marque}{\addtocounter{thm}{1}
{\smallskip \noindent \textit{\thethm}~---~}}

\renewcommand\atop[2]{\ensuremath{\genfrac..{0pt}{1}{#1}{#2}}}

\newcommand\atopp[2]{\genfrac{}{}{0pt}{}{#1}{#2}}

\title[Automorphic torsion congruences]{Automorphic congruences between torsion cohomological classes}


\author{Boyer Pascal}
\email{boyer@math.univ-paris13.fr}
\address{Universit\'e Paris 13, Sorbonne Paris Nord \\
LAGA, CNRS, UMR 7539\\ 
F-93430, Villetaneuse (France) \\
Coloss: ANR-19-PRC}

\frontmatter

\begin{abstract}
For two representations of some local division algebra, congruent modulo $l$, 
giving rise to two Harris-Taylor local systems on the corresponding Newton strata of the
special fiber of a KHT Shimura varieties, we prove that the $l$-torsion of each of their 
cohomology groups with compact supports are isomorphic, or equivalently the free quotients 
of each of the cohomology
groups are congruent modulo $l$. We then deduce the 
construction of accurate non tempered automorphic congruences for a similitude
group $G/\Qm$ with signature $(1,d-1)$. 
\end{abstract}

\subjclass{11F70, 11F80, 11F85, 11G18, 20C08}

\keywords{Shimura varieties, torsion in the cohomology, maximal ideal of the Hecke algebra,
localized cohomology, galois representation}

\maketitle

\pagestyle{headings} \pagenumbering{arabic}

%
%

\maketitle


\tableofcontents

\section{Introduction}

The first appearance of automorphic congruences can be traced back to
Ramanujan's works on the $\tau$-functions. Now existence and
construction of higher dimensional automorphic congruences play an
essential role in particular to the Langlands program. 

One possible geometric approach
is to look at the cohomology groups of some $\overline \Zm_l$-local system 
on the special fiber, at a place $v$ of the reflex field $F$, of a Shimura variety associated to 
a reductive group $G/\Qm$,
whose free quotients are expected to be of automorphic
nature. The idea is then to take two such local systems whose modulo $l$ reductions
are isomorphic so that the modulo $l$ reduction of the alternated sum of their
cohomology groups are equal in the corresponding Grothendieck group of
$\overline \Fm_l$-representations of $G(\Am)$, cf. lemma 4.1.5 of \cite{boyer-aif}.
If one expect to construct accurate automorphic congruences we need at least to deal
with individual groups of cohomology and not just the alternated sum.
We then face two main problems.
\begin{itemize}
\item[(a)] The torsion may interfere and prevent us to relate the modulo $l$ reduction
of the free quotients of the cohomology groups of our two $\overline \Zm_l$-local
systems.

\item[(b)] Even if we can manage about the torsion, the 
$\overline \Qm_l$-cohomology of our local systems may involve different sorts
of automorphic representations and we would only be able to construct
unaccurate automorphic congruences.
\end{itemize}

\noindent - Usually to overcome (a), one localizes at a non pseudo Eisenstein ideal 
but we then loose degenerate automorphic representations which is not satisfactory. 

\noindent - For problem (b), over $\overline \Qm_l$,  working
with intermediate extensions of local systems, allows, using purity,
to separate the contributions of each cohomology groups. Meanwhile we still face the same
issue with various automorphic representations contributing to the same cohomology group.
One idea is then to consider various local systems and try to cross-check the informations.

When considering $\overline \Zm_l$-intermediate extensions, before taking
their modulo $l$ reduction, appears the problem
that they may be many $\overline \Zm_l$-intermediate extensions whose
modulo $l$ reductions do not coincide: if we need to use duality this might cause
an issue. This is why
in \cite{boyer-aif} we only deal with extension by zero of our local systems and
then consider only the alternated sum of their cohomology groups.
In \cite{boyer-duke} we solve this problem, i.e. we are able to explain the 
difference\footnote{more precisely we understand the $l$-torsion of the quotient between
two intermediate extensions of a Harris-Taylor local system}
between two different intermediate
extensions for
Harris-Taylor local systems living on Newton strata of KHT-Shimura varieties.

\medskip

In this article, in the banal case\footnote{i.e. when the order of $q_v$, the cardinal of the residue filed of $F$ at $v$, modulo $l$ is strictly greater than $d$}, playing with all the Harris-Taylor local systems associated 
to one cuspidal representation, we are able to
identify inside one cohomology group of two congruent Harris-Taylor local
systems, both the torsion and the various contributions of the automorphic
representations. More precisely
\begin{itemize}
\item[(a)] we first prove the conjecture 5.10
of \cite{boyer-aif}, saying that the modulo $l$ reduction of the torsion
submodule of the cohomology groups of both the intermediate extensions and extensions by zero,
of Harris-Taylor local systems,
only depends on the modulo $l$ reduction of the local system we started with. 

\item[(b)] Secondly, thanks to the computations of \cite{boyer-aif} recalled in
\S \ref{para-Hi},
we are able to identify, inside one particular
cohomology group of two congruent Harris-Taylor local systems, the contributions
of the automorphic representations
according to the shape of their local component at the place considered.
\end{itemize}

As an application, we are able to prove the following statement about
automorphic congruences, cf. theorem \ref{thm-main}. We start with an irreducible automorphic
representation $\Pi$ of $G(\Am)$ with weight $\xi$, an irreducible algebraic representation of $G(\Qm)$, which locally at some prime number\footnote{$l$ is still supposed to be banal} $p \neq l$ is,
with the notations of the next section, of the form
$$\Pi_p \simeq \Pi_p^v \otimes \speh_s \bigl ( \st_{t_1}(\pi_{v,1}) \times \cdots \times \st_{t_r}(\pi_{v,r}) \bigr ),$$ 
for some $s \geq 1$, where, cf. the convention of \S \ref{para-KHT},
$v| p$ is a fixed split place of the CM field $F$ defining $G$  and where
$\pi_{v,1},\cdots,\pi_{v,r}$ are irreducible cuspidal representations. We then 
consider both a weight $\xi'$ congruent to $\xi$ modulo $l$ and
some local congruence $\pi'_{v,1}$ of $\pi_{v,1}$ which we suppose to be 
supercuspidal modulo $l$. We then prove, cf. the introduction 
of \S \ref{para-congruence}, the existence of an irreducible automorphic
representation $\Pi'$ of $G(\Am)$ of weight $\xi'$ such that
\begin{itemize}
\item locally $\Pi'$ at the place $v$ is isomorphic to 
$$\speh_s \Bigl (\st_{t'_1}(\pi'_{v,1}) \times \cdots \times \st_{t'_{r'}}(\pi'_{v,r'}) \Bigr )$$ 
with $t'_1=t_1$ and where for $i=2,\cdots,r'$, the $\pi'_{v,i}$ are
irreducible cuspidal representations,

\item globally $\Pi'$ share with $\Pi$ the same level outside $v$ and is weakly
congruent to $\Pi$ in the sense that, modulo an uniformizer $\varpi_L$ of some finite extension
$L/\Qm_l$, it shares the same Satake parameters
at the unramified places.
\end{itemize}
More precisely theorem \ref{thm-main} gives a quantitative version in terms of an
equality between multiplicities in the space of automorphic forms.

\section{Notations about representations of $GL_n(K)$}

\label{para-rap-rep}

We fix a finite extension $K/\Qm_p$ with residue field $\Fm_q$.
We denote by $|-|$ its absolute value and $\val$ for the valuation. We fix a prime $l$ distinct from $p$.
For a representation $\pi$ of $GL_d(K)$ and $n \in \frac{1}{2} \Zm$, set 
$$\pi \{ n \}:= \pi \otimes q^{-n \val \circ \det}.$$

\begin{notas} \label{nota-ind}
For $\pi_1$ and $\pi_2$ representations of respectively $GL_{n_1}(K)$ and
$GL_{n_2}(K)$, we will denote by
$$\pi_1 \times \pi_2:=\ind_{P_{n_1,n_1+n_2}(K)}^{GL_{n_1+n_2}(K)}
\pi_1 \{ \frac{n_2}{2} \} \otimes \pi_2 \{-\frac{n_1}{2} \},$$
the normalized parabolic induced representation where for any sequence
$\underline r=(0< r_1 < r_2 < \cdots < r_k=d)$, we write $P_{\underline r}$ for 
the standard parabolic subgroup of $GL_d$ with Levi
$$GL_{r_1} \times GL_{r_2-r_1} \times \cdots \times GL_{r_k-r_{k-1}}.$$ 
\end{notas}

Recall that a representation
$\varrho$ of $GL_d(K)$ with coefficients either in $\overline \Qm_l$ or $\overline \Fm_l$,
is called \emph{cuspidal} (resp. \emph{supercuspidal})
if it is not a subspace (resp. subquotient) of a proper parabolic induced representation.
When the field of coefficients is of characteristic zero then these two notions coincides,
but this is no more true for $\overline \Fm_l$.

\begin{defin} \label{defi-rep} 
(see \cite{zelevinski2} \S 9 and \cite{boyer-compositio} \S 1.4)
Let $g$ be a divisor of $d=sg$ and $\pi$ an irreducible cuspidal 
$\overline \Qm_l$-representation of $GL_g(K)$. 
The induced representation
$$\pi\{ \frac{1-s}{2} \} \times \pi \{ \frac{3-s}{2} \} \times \cdots \times \pi \{ \frac{s-1}{2} \}$$ 
holds a unique irreducible quotient (resp. subspace) denoted $\st_s(\pi)$ (resp.
$\speh_s(\pi)$); it is called a generalized Steinberg (resp. Speh) representation.
\end{defin}

Any generic irreducible representation $\Pi$ of $GL_d(K)$ is isomorphic to
$\st_{t_1}(\pi_1) \times \cdots \times \st_{t_r}(\pi_r)$
where for $i=1,\cdots, r$, the $\pi_i$ are irreducible cuspidal representations of 
$GL_{g_i}(K)$ and $t_i \geq 1$ are such that $\sum_{i=1}^r t_ig_i=d$. 
For $\Pi$ an irreducible generic representation and $s \geq 1$, we denote by
$$\speh_s(\Pi)=\speh_s \bigl ( \st_{t_1}(\pi_1) \bigr ) \times \cdots \times \speh_s \bigl (\st_{t_r}(\pi_r) \bigr )$$
the Langlands quotient of the parabolic induced representation 
$\Pi \{ \frac{1-s}{2} \} \times 
\Pi \{ \frac{3-s}{2} \} \times \cdots \times \Pi \{ \frac{s-1}{2} \}$. 
In terms of the local Langlands correspondence, if $\sigma$ is the representation of 
the absolute Galois group
$\gal(\bar F/F)$ of $F$, associated to $\Pi$, then
$\speh_s(\Pi)$ corresponds to 
$\sigma(\frac{1-s}{2}) \oplus \cdots \oplus \sigma(\frac{s-1}{2})$ where
$\sigma(\frac{1-s+2k}{2})$ correspond to $\Pi \{ \frac{1-s+2k}{2} \}$ by the global
Langlands correspondance.

\begin{defin} Let $D_{K,d}$ be the central division algebra over $K$ with invariant 
$1/d$ and with maximal order denoted by $\DC_{K,d}$. 
\end{defin}

The local Jacquet-Langlands correspondance is a bijection $\JL$ 
between the set of equivalence classes of irreducible admissible representations
of $D_{K,d}^\times$ and the one of irreducible admissible essentially square
integrable representations of $GL_{d}(K)$.

\begin{nota} 
For $\pi$ a cuspidal irreducible $\bar \Qm_l$-representation of $GL_g(K)$ 
and for $t \geq 1$, we then denote by
$\pi[t]_D$ the representation $\JL^{-1}(\st_t(\pi))^\vee$ of $D_{K,tg}^\times$,
where the symbol $\vee$ stands for contragredient.
\end{nota}

\section{KHT-Shimura varieties and Harris-Taylor local systems}
\label{para-KHT}

Let $F=F^+ E$ be a CM field where $E/\Qm$ is quadratic imaginary and $F^+/\Qm$
totally real with a fixed real embedding $\tau:F^+ \hookrightarrow \Rm$. 
We denote by $c$ the non trivial element of $\gal(E/\Qm)$. For a place $v$ of $F$,
we will denote by
\begin{itemize}
\item $F_v$ the completion of $F$ at $v$,

\item $\OC_v$ the ring of integers of $F_v$ with maximal ideal $\MC_v$,

\item $\varpi_v$ a uniformizer,

\item $q_v$ the order of the residual field $\kappa(v)=\OC_v/(\varpi_v)$.
\end{itemize}
Let $B$ be a division algebra with center $F$, of dimension $d^2$ such that at every place $x$ of $F$,
either $B_x$ is split or a local division algebra and suppose $B$ provided with an involution of
second kind $*$ such that $*_{|F}$ is the complex conjugation. For any
$\beta \in B^{*=-1}=\{ x \in B: x^*=-x \}$, denote by $\sharp_\beta$ the involution $x \mapsto x^{\sharp_\beta}=\beta x^*
\beta^{-1}$ and $G/\Qm$ the group of similitudes, denoted $G_\tau$ in \cite{h-t}, defined for every
$\Qm$-algebra $R$ by 
$$
G(R)  \simeq   \{ (\lambda,g) \in R^\times \times (B^{op} \otimes_\Qm R)^\times  \hbox{ such that } 
gg^{\sharp_\beta}=\lambda \}
$$
with $B^{op}=B \otimes_{F,c} F$. 
If $x$ is a place of $\Qm$ split $x=yy^c$ in $E$ then 
\addtocounter{thm}{1}
\begin{equation} \label{eq-facteur-v}
G(\Qm_x) \simeq (B_y^{op})^\times \times \Qm_x^\times \simeq \Qm_x^\times \times
\prod_{z_i} (B_{z_i}^{op})^\times,
\end{equation}
where, identifying places of $F^+$ over $x$ with places of $F$ over $y$,
$x=\prod_i z_i$ in $F^+$.

\noindent \textbf{Convention}: for $x=yy^c$ a place of $\Qm$ split in $E$ and $z$ a place of $F$ over $y$
as before, we shall make throughout the text, the following abuse of notation by denoting 
$G(F_z)$ in place of the factor $(B_z^{op})^\times$ in the formula (\ref{eq-facteur-v}).

In \cite{h-t}, the author justifies the existence of $G$ like before such that moreover
\begin{itemize}
\item if $x$ is a place of $\Qm$ non split in $E$ then $G(\Qm_x)$ is quasi split;

\item the invariants of $G(\Rm)$ are $(1,d-1)$ for the embedding $\tau$ and $(0,d)$ for the others.
\end{itemize}

\begin{nota}
We denote by $\IC$ the set of open compact subgroups 
small enough of $G(\Am^\oo)$.
For $I \in \IC$,  $\sh_{I,\eta} \longrightarrow \spec F$ is the associated
generic fiber of the Shimura variety said of Kottwitz-Harris-Taylor type.
\end{nota}

\begin{defin} \label{defi-spl}
Define $\spl$ as the set of  places $v$ of $F$ such that $p_v:=v_{|\Qm} \neq l$ is split in $E$ and
$B_v^\times \simeq GL_d(F_v)$.  For each $I \in \IC$, write
$\spl(I)$ the subset of $\spl$ of places which do not divide the level $I$.
\end{defin}

As in  \cite{h-t} bottom of page 90, a compact open subgroup $U$ of $G(\Am^\oo)$ is 
said \emph{small enough}
if there exists a place $x$ such that the projection from $U^v$ to $G(\Qm_x)$ does 
not contain any element of finite order except identity.

In the sequel, $v$ will denote a fixed place of $F$ in $\spl$. For such a place $v$ 
the scheme $\sh_{I,\eta}$ has a projective model $\sh_{I,v}$ over $\spec \OC_v$
with geometric special fiber $\sh_{I,\bar s_v}$. For $I$ going through $\IC$, the projective system 
$(\sh_{I,v})_{I\in \IC}$ 
is naturally equipped with an action of $G(\Am^\oo) \times \Zm$ such that 
$w_v$ in the Weil group $W_v$ of $F_v$ acts by $-\deg (w_v) \in \Zm$,
where $\deg=\val \circ \art^{-1}$ and $\art^{-1}:W_v^{ab} \simeq F_v^\times$ is 
the Artin isomorphism which sends geometric Frobenius to uniformizers.

\begin{notas} (see \cite{boyer-invent2} \S 1.3)
For $I \in \IC$, the Newton stratification of the geometric special fiber 
$\sh_{I,\bar s_v}$ is denoted by
$$\sh_{I,\bar s_v}=:\sh^{\geq 1}_{I,\bar s_v} \supset \sh^{\geq 2}_{I,\bar s_v} 
\supset \cdots \supset \sh^{\geq d}_{I,\bar s_v}$$
where $\sh^{=h}_{I,\bar s_v}:=\sh^{\geq h}_{I,\bar s_v} - \sh^{\geq h+1}_{I,\bar s_v}$ is 
an affine scheme\footnote{see for example \cite{ito2}}, smooth of pure dimension 
$d-h$ built up by the geometric 
points whose connected part of its Barsotti-Tate group is of rank $h$.
For each $1 \leq h <d$, write
$$i_{h}:\sh^{\geq h}_{I,\bar s_v} \hookrightarrow \sh^{\geq 1}_{I,\bar s_v}, \quad
j^{\geq h}: \sh^{=h}_{I,\bar s_v} \hookrightarrow \sh^{\geq h}_{I,\bar s_v},$$
and $j^{=h}=i_h \circ j^{\geq h}$.
\end{notas}

Consider now the ideals $I^v(n):=I^vK_v(n)$ where
$$K_v(n):=\ker(GL_d(\OC_v) \twoheadrightarrow GL_d(\OC_v/\MC_v^n)).$$
Recall then that $\sh_{I^v(n),\bar s_v}^{=h}$ is geometrically induced
under the action of the parabolic subgroup $P_{h,d}(\OC_v/\MC_v^n)$. 
Concretely this 
means that there exists a closed subscheme 
$\sh_{I^v(n),\bar s_v,\overline{1_{h}}}^{=h}$ stabilized by the Hecke 
action of $P_{h,d}(F_v)$ and such that
$$\sh_{I^v(n),\bar s_v}^{=h} = \sh_{I^v(n),\bar s_v,\overline{1_{h}}}^{=h} 
\times_{P_{h,d}(\OC_v/\MC_v^n)} GL_d(\OC_v/\MC_v^n),$$
meaning that $\sh_{I^v(n),\bar s_v}^{=h} $ is the disjoint union of copies of
$\sh_{I^v(n),\bar s_v,\overline{1_{h}}}^{=h}$ indexed by 
$GL_d(\OC_v/\MC_v^n)/P_{h,d}(\OC_v/\MC_v^n)$ and 
exchanged by the action of
$GL_d(\OC_v/\MC_v^n)$.

\begin{notas} 
For $1 \leq t \leq s_g:=\lfloor d/g \rfloor$, let $\Pi_t$ be any representation of 
 $GL_{d-tg}(F_v)$. We then denote by
$$\widetilde{HT}_1(\pi_v,\Pi_t):=\LC(\pi_v[t]_D)_{\overline{1_{tg}}} 
\otimes \Pi_t \otimes \Xi^{\frac{tg-d}{2}}$$ 
the Harris-Taylor local system on the Newton stratum $\sh^{=tg}_{I,\bar s_v,\overline{1_{tg}}}$ where 
\begin{itemize}
\item $\LC(\pi_v[t]_D)_{\overline{1_{tg}}}$ is defined thanks\footnote{It is the local
system associated to the $(\pi_v[t]_D)_{|\DC_{v,tg}^\times}$ by the Igusa variety
which is Galois over the Newton stratum with Galois group $\DC_{v,tg}^\times$,
cf. \cite{boyer-invent2} \S 1.4.}
to Igusa varieties attached to the representation $\pi_v[t]_D$,

\item $\Xi:\frac{1}{2} \Zm \longrightarrow \overline \Zm_l^\times$ is defined by 
$\Xi(\frac{1}{2})=q^{1/2}$.
\end{itemize}
We also introduce the induced version
$$\widetilde{HT}(\pi_v,\Pi_t):=\Bigl ( \LC(\pi_v[t]_D)_{\overline{1_{tg}}} 
\otimes \Pi_t \otimes \Xi^{\frac{tg-d}{2}} \Bigr) \times_{P_{tg,d}(F_v)} GL_d(F_v),$$
where the unipotent radical of $P_{tg,d}(F_v)$ acts trivially and the action of
$$(g^{\oo,v},\left ( \begin{array}{cc} g_v^c & * \\ 0 & g_v^{et} \end{array} \right ),\sigma_v) 
\in G(\Am^{\oo,v}) \times P_{tg,d}(F_v) \times W_v$$ 
is given
\begin{itemize}
\item by the action of $g_v^c$ on $\Pi_t$ and 
$\deg(\sigma_v) \in \Zm$ on $ \Xi^{\frac{tg-d}{2}}$, and

\item the action of $(g^{\oo,v},g_v^{et},\val(\det g_v^c)-\deg \sigma_v)
\in G(\Am^{\oo,v}) \times GL_{d-tg}(F_v) \times \Zm$ on $\LC_{\overline \Qm_l}
(\pi_v[t]_D)_{\overline{1_{tg}}} \otimes \Xi^{\frac{tg-d}{2}}$.
\end{itemize}
We also introduce
$$HT(\pi_v,\Pi_t)_{\overline{1_{tg}}}:=\widetilde{HT}(\pi_v,\Pi_t)_{\overline{1_{tg}}}[d-tg],$$
and the perverse sheaf
$$P(t,\pi_v)_{\overline{1_{tg}}}:=j^{=tg}_{1,!*} HT(\pi_v,\st_t(\pi_v))_{\overline{1_{tg}}} 
\otimes \Lm(\pi_v),$$
and their induced version, $HT(\pi_v,\Pi_t)$ and $P(t,\pi_v)$, where 
$$j^{=h}=i^h \circ j^{\geq h}:\sh^{=h}_{I,\bar s_v} \hookrightarrow
\sh^{\geq h}_{I,\bar s_v} \hookrightarrow \sh_{I,\bar s_v}$$ 
and $\Lm^\vee$, the dual of $\Lm$, is the local Langlands correspondence.
\end{notas}

For $\overline \Qm_l$ or $\overline \Fm_l$ coefficients, we will mention it in the index of the local
system, as for example $HT_{\overline \Qm_l}(\pi_v,\Pi_t)$ or  $HT_{\overline \Fm_l}(\pi_v,\Pi_t)$.
More precisely if $L$ is a finite extension of $\Qm_l$ so that both $\pi_v$ and $\Pi_t$ are
defined over $L$, we will also introduce the notation $\Fm_L$ for the residue field of the
ring of integers $\OC_L$ of $L$ and the corresponding notations for the sheaves.

\rem Note that the action of a geometric Frobenius element on these Harris-Taylor local systems
is given by some integral power of $q^{1/2}$: see the remark after the proposition
\ref{prop-hip} for the cohomological version of this fact.

\section{$\overline \Qm_l$-cohomology groups}
\label{para-Hi}

From now on, we fix a prime number $l$ unramified in $E$.
Let us first recall some known facts about irreducible algebraic representations of $G$,
see for example \cite{h-t} p.97. Let $\sigma_0:E \hookrightarrow
\overline{\Qm}_l$ be a fixed embedding. Let denote by $\Phi$ the set of embeddings 
$\sigma:F \hookrightarrow \overline \Qm_l$ whose restriction to $E$ equals $\sigma_0$.
There exists then an explicit bijection between irreducible algebraic representations $\xi$ of $G$ 
over $\overline \Qm_l$ and $(d+1)$-uples
$\bigl ( a_0, (\overrightarrow{a_\sigma})_{\sigma \in \Phi} \bigr )$
where $a_0 \in \Zm$ and for all $\sigma \in \Phi$, we have $\overrightarrow{a_\sigma}=
(a_{\sigma,1} \leq \cdots \leq a_{\sigma,d} )$.

For $L \subset \overline \Qm_l$ a finite extension of $\Qm_l$ such that the representation
$\iota^{-1} \circ \xi$ of highest weight
$\bigl ( a_0, (\overrightarrow{a_\sigma})_{\sigma \in \Phi} \bigr )$,
is defined over $L$, write $W_{\xi,L}$ the space of this representation and $W_{\xi,\OC_L}$
a stable lattice under the action of the maximal open compact subgroup $G(\Zm_l)$, 
where $\OC_L$ is the ring of integers of $L$ with uniformizer $\varpi_L$.

\rem if $\xi$ is supposed to be $l$-small, in the sense that for all $\sigma \in \Phi$ and all
$1 \leq i < j \leq n$ we have $0 \leq a_{\tau,j}-a_{\tau,i} < l$, then such a stable lattice is unique
up to a homothety.

\begin{nota} \label{nota-Vxi}
We will denote by $V_{\xi,\OC_L/\varpi_L^n}$ the local system on $\sh_{I,v}$ as well as
$$V_{\xi,\OC_L}=\lim_{\atopp{\longleftarrow}{n}} V_{\xi,\OC_L/\varpi_L^n} \quad \hbox{and} \quad
V_{\xi,L}=V_{\xi,\OC_L} \otimes_{\OC_L} L.$$
For the $\overline \Zm_l$ and $\overline \Qm_l$ version, we will write respectively
$V_{\xi,\overline \Zm_l}$ and $V_{\xi,\overline \Qm_l}$. 
\end{nota}
%
%

\begin{defin} 
For a $\overline \Zm_l$-sheaf $\FC$ on $\sh_{I,v}$, we will denote by 
$\FC_\xi$ the sheaf $\FC \otimes V_{\xi,\overline \Zm_l}$. We will also use similar
notations with $\OC_L$ when both $\FC$ and $\xi$ are defined other $\OC_L$.
\end{defin}

\begin{defin} \label{defi-automorphe}
Let $\xi$ be an irreducible algebraic $\Cm$-representation with finite dimension
of $G$. Then an irreducible $\Cm$-representation $\Pi_{\oo}$ of $G(\Am_{\oo})$ 
is said $\xi$-cohomological if there exists an integer $i$ such that 
$$H^i((\lie G(\Rm)) \otimes_\Rm \Cm,U_\tau,\Pi_\oo \otimes \xi^\vee) \neq (0)$$
where $U_\tau$ is a maximal open compact modulo center subgroup of $G(\Rm)$, 
cf. \cite{h-t} p.92.  We then denote by $d_\xi^i(\Pi_\oo)$ the dimension 
of this cohomology group.
\end{defin}

\rem If $\xi$ has $\bar \Qm_l$-coefficients, then an irreducible 
$\bar \Qm_{l}$-representation $\Pi^{\oo}$ of $G(\Am^{\oo})$ 
is said $\xi$-cohomolocal if there exists a $\Cm$-representation $\Pi_\oo$ 
of $G(\Am_\oo)$  such that 
$\iota_{l}\Bigl ( \Pi^{\oo} \Bigr ) \otimes \Pi_{\oo}$ is an automorphic 
$\Cm$-representation of $G(\Am)$, where $\iota_l:\overline \Qm_l \simeq \Cm$.

Recall that $G(\Qm_p) \simeq \Qm_p^\times \times
GL_d(F_v) \times \prod_{i=2}^r (B_{v_i}^{op})^\times$, for a fixed place $v$ of $F$
above $p$. For $\Pi$ an irreducible
representation of $G(\Am)$, its component for the similitude factor
$\Qm_p^\times$ is denoted as in \cite{h-t}, $\Pi_{p,0}$: as all the open compact
subgroup of $\IC$ contain $\Zm_p^\times$, the representations $\Pi$ which
appear in the cohomology groups will all verify that $(\Pi_{p,0})_{|\Zm_p^\times}=1$.
We now consider
\begin{itemize}
\item an admissible irreducible representation $\Pi$ of $G(\Am)$ with multiplicity
$m(\Pi)$ in the space of automorphic forms,

\item $\pi_v$ a cuspidal irreducible representation of $GL_g(F_v)$.

\item We also fix a finite level $I$ and we denote by $S$ the set of places $x$
such that $I_x$ is maximal and we moreover impose $v \in S$. We then denote
by $\Tm^S$ the $\overline \Zm_l$-unramified Hecke algebra outside $S$.
\end{itemize}
From \cite{boyer-compositio} and more precisely from \cite{boyer-aif}, we now recall
the main results about the $\overline \Qm_l$-cohomology groups of
Harris-Taylor perverse sheaves.
We first introduce some \textbf{notations}.
\begin{itemize}
\item For any $\overline \Qm_l$-perverse sheaf $P$ on the projective system
of schemes $\sh_{I^v(n),\bar s_v}^{=h}$ with $I^v(n):=I^vK_v(n)$, 
we consider
$$H^i(\sh_{I^v(\oo),\bar s_v},P):=\lim_{\atop{\longrightarrow}{n}} 
H^i(\sh_{I^v(n),\bar s_v},P).$$
It is equipped with an action of $\Tm^S_{\overline \Qm_l} 
\times GL_d(F_v) \times W_v$ and its image in the
Grothendieck group of admissible representations of $\Tm^S \times
GL_d(F_v) \times W_v$ will be denoted $[H^i(\sh_{I^v(\oo),\bar s_v},P)]$.

\item For a fixed maximal ideal $\widetilde{\mathfrak m}$ of 
$\Tm^S_{\overline \Qm_l}$, and a $\overline \Qm_l$-perverse
sheaf $P$ as above, we denote by  
$H^i(\sh_{I^v(\oo),\bar s_v},P)_{\widetilde{\mathfrak m}}$
the localization at $\widetilde{\mathfrak m}$
and $[H^i((\sh_{I^v(\oo),\bar s_v},P))]_{\widetilde{\mathfrak m}}$ by its image
in the Grothendieck group of admissible representations of $GL_d(F_v) \times W_v$.
If $\widetilde{\mathfrak m}$ is associated to a irrreductible representation
$\Pi^{\oo,v}$ of $G(\Am^{\oo,v})$, we might also denote it by 
$[H^i((\sh_{I^v(\oo),\bar s_v},P)]\{ \Pi^{\oo,v} \}$.

\item In particular for $HT(\pi_v,\Pi_t)$ a Harris-Taylor local system, 
we denote by 
$[H^i_c(HT(\pi_v,\Pi_t))]_{\widetilde{\mathfrak m}}$  (resp. 
$[H^i_{!*}(HT(\pi_v,\Pi_t))]_{\widetilde{\mathfrak m}}$), 
the image of $H^i(\sh_{I^v(\oo),\bar s_v},j^{=h}_! 
HT(\pi_v,\Pi_t))_{\widetilde{\mathfrak m}}$
(resp. $H^i(\sh_{I^v(\oo),\bar s_v},\lexp p j^{=h}_{!*} 
HT(\pi_v,\Pi_t))_{\widetilde{\mathfrak m}}$)
in the Grothendieck group of $GL_d(F_v) \times W_v$ admissible representations.
We also use a similar notation for $HT_{1_{tg}}(\pi_v,\Pi_t)$
by replacing $GL_d(F_v)$ by $P_{tg,d}(F_v)$. If we want to add a weight $\xi$, we then
note $[H^i_c(HT_\xi(\pi_v,\Pi_t))]_{\widetilde{\mathfrak m}}$  (resp. 
$[H^i_{!*}(HT_\xi(\pi_v,\Pi_t))]_{\widetilde{\mathfrak m}}$).
\end{itemize}

It is also well known, cf. lemma 3.2 of \cite{boyer-aif} that if $\Pi$ is an irreducible
automorphic representation of $G(\Am)$ then its local component
$\Pi_v$ is isomorphic to 
$$\speh_{s}(\st_{t_1}(\pi_{1,z}))  \times \cdots \times \speh_{s}(\st_{t_u}(\pi_{u,z})) 
\simeq \speh_s \Bigl ( \st_{t_1}(\pi_{1,z}) \times \cdots \times \st_{t_u}(\pi_{u,z}) \Bigr )$$
where the $\pi_{i,z}$ are irreducible cuspidal representations. 

\begin{nota} \label{nota-AC}
For $\pi_v$ an irreducible cuspidal unitary representation of $GL_g(F_v)$, we denote by
$\AC_{\xi,\pi_v}(r,s)$ the set of equivalence classes of automorphic irreducible 
representations of $G(\Am)$ which are $\xi$-cohomological and such that
\begin{itemize}
\item $(\Pi_{p,0})_{|\Zm_p^\times}=1$,

\item its local component at $v$ looks like $\speh_s(\st_t(\pi_v)) \times ?$
with $r=s+t-1$ and $?$ is an irreducible representation of
$GL_{d-stg}(F_v)$ which we do not want to precise.
\end{itemize}
\end{nota}

\rem An automorphic irreducible representation $\Pi$ of $G(\Am)$ 
which is $\xi$-cohomological and such that 
$$\Pi_v \simeq  \speh_s \Bigl (\st_{t_1}(\pi_{1,v}) \times \cdots \times \st_{t_u}(\pi_{u,v}) \Bigr )$$
belongs to $\AC_{\xi,\pi_{v,i}}(s+t_i-1,s)$ for $i=1,\cdots,u$.

\begin{prop} (proposition 3.6 of \cite{boyer-aif}) \label{prop-hip} \\
Let $\pi_v$ be an irreducible cuspidal representation of $GL_g(F_v)$ and
$1 \leq r \leq d/g$. Then we have
\begin{multline*}
[H^i_{!*}(HT_{\overline{1_{rg}},\xi}(\pi_v,\Pi_r))]\{ \Pi^{\oo,v} \} = 
\frac{e_{\pi_v} \sharp \ker^1(\Qm,G)}{d} \\
\sum_{\atop{(s,t)}{\Pi \in \AC_{\xi,\pi_v}(s+t-1,s)}} 
\sum_{\Pi' \in \UC_{G,\xi}(\Pi^{\oo,v})} \sum_{m_{s,t}(r,i)=1}
m(\Pi') d_\xi(\Pi'_\oo) 
\Bigl ( \Pi_r \otimes R_{\pi_v}(s,t)(r,i)(\Pi_v) \Bigr )
\end{multline*}
where
\begin{itemize}
\item $\UC_{G,\xi}(\Pi^{\oo,v})$ is the set of equivalence classes of automorphic,
$\xi$-cohomological, irreducible representations $\Pi'$ of $G(\Am)$ such that 
$(\Pi')^{\oo,v} \simeq \Pi^{\oo,v}$;

\item $\Pi_v$ is the local component of all the $\Pi' \in \UC_{G,\xi}(\Pi^{\oo,v})$
such that $d_\xi(\Pi'_\oo) \neq 0$, is the common value
of the $d_\xi^i(\Pi'_\oo)$  for $i \equiv s \mod 2$, cf. corollary VI.2.2 of \cite{h-t}
and corollary 3.3 of \cite{boyer-aif}
\end{itemize}
Concerning $R_{\pi_v}(s,t)(r,i)(\Pi_v)$ as a sum of representations of 
$GL_{d-rg}(F_v) \times \Zm$, 
for $\Pi_v \simeq  \speh_s(\st_{t_1}(\pi_{1,v})) \times \cdots \times 
\speh_s(\st_{t_u}(\pi_{u,v}))$, it is given by the formula
$$R_{\pi_v}(r,i)(\Pi_v)=\sum_{k:~\pi_{k,v} \sim_i \pi_{v}} m_{s,t_k}(r,i) 
R_{\pi_{v}}(s,t_k)(r,i)(\Pi_v,k) \otimes \Bigl ( \xi_k \otimes \Xi^{i/2} \Bigr )$$
where
\begin{itemize}
\item the characters $\xi_k$ are such that 
$\pi_{k,v} \simeq \pi_v \otimes \xi_k \circ \val \circ \det$;

\item $R_{\pi_{v}}(s,t_k)(r,i)(\Pi_v,k) $ can be written as
\begin{multline*} 
R_{\pi_{v}}(s,t_k)(r,i)(\Pi_v,k) :=\speh_s(\st_{t_1}(\pi_{1,v})) \times \cdots \times  \speh_s(\st_{t_{k-1}}(\pi_{k-1,v})) \\ 
\times R_{\pi_{k,v}}(s,t_k)(r,i) \times \\ 
\speh_s(\st_{t_{k+1}}(\pi_{k+1,v})) \times \cdots \times  \speh_s(\st_{t_u}(\pi_{u,v})).
\end{multline*}

\item $R_{\pi_{k,v}}(s,t_k)(r,i)$ is a representation of $GL_{d-rg}(F_v)$ 
which can be computed combinatorially as explained below

\item and $m_{s,t}(r,i) \in \{ 0, 1 \}$ is given in the next definition.
\end{itemize}
\end{prop}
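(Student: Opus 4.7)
The plan is to derive the formula by assembling three ingredients: the global Matsushima--Kottwitz decomposition of the total cohomology, the geometric description of the Newton stratification together with the intermediate extension, and a Zelevinski-style combinatorial computation on the local $GL_d(F_v)$-side.

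\textbf{Step 1 (global input).} I would start from the Matsushima--Kottwitz formula for $[H^*(\sh_{I^v(\oo),\bar s_v},V_{\xi,\overline\Qm_l})]\{\Pi^{\oo,v}\}$. The stabilisation of the trace formula for KHT Shimura varieties, after localisation at the maximal ideal $\widetilde{\mathfrak m}$ cut out by $\Pi^{\oo,v}$, produces a decomposition indexed by $\UC_G(\Pi^{\oo,v})$ with an overall factor $\sharp\ker^1(\Qm,G)/d$, each $\Pi'$ contributing $m(\Pi')\,d_\xi(\Pi'_\oo)$ tensored with its local factor $\Pi'_v$ at $v$. This already carries the outer sum $\sum_{\Pi'\in\UC_G(\Pi^{\oo,v})} m(\Pi')\,d_\xi(\Pi'_\oo)$ and the coefficient $\sharp\ker^1(\Qm,G)/d$.

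\textbf{Step 2 (passing to the intermediate extension).} From the total cohomology I have to extract $[H^i_{!*}(HT(\pi_v,\Pi_r)_\xi)]$. I would use the excision distinguished triangles along $\sh^{\geq rg}_{\bar s_v} \supset \sh^{\geq rg+1}_{\bar s_v}$ together with the weight filtration on $Rj^{\geq rg}_* HT(\pi_v,\Pi_r)$ established in \cite{boyer-invent2}; its graded pieces are again perverse sheaves of Harris-Taylor type on higher strata. Combined with the geometric induction
$$\sh^{=h}_{I^v(n),\bar s_v} = \sh^{=h}_{I^v(n),\bar s_v,\overline{1_h}} \times_{P_{h,d}(\OC_v/\MC_v^n)} GL_d(\OC_v/\MC_v^n),$$
Frobenius reciprocity lets me go from the $GL_d(F_v)$-induced version to the $\overline{1_{rg}}$-part; this is where the combinatorial factor $e_{\pi_v}/d$ appears, through the order of the unramified twist-stabiliser of $\pi_v$ and the size of the Jacquet--Langlands orbit. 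The restriction of the global sum in Step~1 to those $\Pi'$ that can contribute to a Harris--Taylor sheaf of type $(\pi_v,r)$ is what forces the index set $\AC_{\xi,\pi_v}(s+t-1,s)$.

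\textbf{Step 3 (local combinatorics).} With the global sum restricted and localised, the local factor at $v$ is obtained from $\Pi_v\simeq\speh_s(\st_{t_1}(\pi_{1,v}))\times\cdots\times\speh_s(\st_{t_u}(\pi_{u,v}))$ by the Zelevinski/Jacquet combinatorics of segments: only the cuspidal factors with $\pi_{k,v}\sim^i\pi_v$ can pair with $HT(\pi_v,-)$, and for each such $k$ the contribution is computed by removing a length matched by $(r,i)$ from the $k$-th Speh factor, leaving the other factors intact. This is exactly the definition of $R_{\pi_v}(s,t_k)(r,i)(\Pi_v,k)$, and the indicator $m_{s,t}(r,i)\in\{0,1\}$ encodes the unique cohomological degree in which such a removal is geometrically realised, via the shift conventions in the definition of $P(t,\pi_v)$ and the numerology of the graded pieces of the weight filtration.

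The main obstacle is Step 2: one must control the intermediate extension cleanly enough to isolate the $!*$-cohomology from both the $!$- and the $*$-cohomologies, keeping the full $GL_d(F_v)\times W_v$-equivariance. This rests on the purity and weight-monodromy statements for $R j^{\geq rg}_* HT(\pi_v,\Pi_r)$ proved in \cite{boyer-invent2,boyer-compositio}, which guarantee that the graded pieces of the weight filtration are themselves shifted Harris--Taylor perverse sheaves, and in particular that no spurious contributions from non-$\pi_v$-isotypic strata or from torsion pollute the formula at the generic fibre.
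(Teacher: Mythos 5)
This proposition is not proved in the present paper: it is quoted verbatim from proposition 3.6 of \cite{boyer-aif}, whose proof in turn rests on the computations of \cite{boyer-compositio}. Your three-step outline captures the broad shape of those arguments in Steps 2 and 3, but Step 1 runs the logic in the wrong direction, and this is a genuine gap. The Matsushima--Kottwitz formula applied to $[H^*(\sh_{I^v(\oo),\bar s_v}, V_{\xi,\overline\Qm_l})]\{\Pi^{\oo,v}\}$ gives the total cohomology as an automorphic sum whose local factor at $v$ is simply $\Pi_v$ viewed as a representation of $GL_d(F_v)$; it does not tell you, for a fixed Harris--Taylor sheaf $HT(\pi_v,\Pi_r)$, which automorphic $\Pi$ contribute, in which cohomological degree, nor with which Jacquet-module constituent of $\Pi_v$. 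To extract $[H^i_{!*}(HT(\pi_v,\Pi_r))]$ from the total cohomology by excision you would need to already know how the nearby cycle complex decomposes across the Newton stratification into Harris--Taylor perverse sheaves — but that decomposition is precisely the main theorem of \cite{boyer-compositio}, and its proof takes as input exactly the kind of strata-level formula you are trying to derive. Your Step 1 therefore quietly assumes the conclusion.

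The actual argument goes from the strata up, not from the global cohomology down. One first computes $[H^*_c(j^{=tg}_! HT(\pi_v,\Pi_t))]$ directly via the Igusa varieties attached to each Newton stratum and the Kottwitz--Fujiwara trace formula / point counting; it is here — and not in the formula for $V_\xi$ on the whole special fibre — that the Matsushima-type decomposition with the normalizing constant $e_{\pi_v}\sharp\ker^1(\Qm,G)/d$ and the restriction to the index set $\AC_{\xi,\pi_v}$ emerge, since the Igusa variety already isolates the $\pi_v$-isotypic part. One then feeds these $j_!$-cohomologies into the spectral sequence associated to the resolution (\ref{eq-resolution0}) of $\lexp p j^{=rg}_{!*} HT(\pi_v,\Pi_r)$ by the sheaves $j_!^{=sg}$ for $s \geq r$; the degree bookkeeping together with the Jacquet-module computation $J_{P^{op}_{rg,d}}$ on $\speh_s(\st_{t_k}(\pi_{k,v}))$ is exactly what produces the indicator $m_{s,t}(r,i)$ and the representations $R_{\pi_v}(s,t)(r,i)(\Pi_v)$; your Step 3 has this essentially right. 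Replacing your Step 1 by the Igusa-variety input and replacing the excision-from-total-cohomology in Step 2 by the spectral sequence of (\ref{eq-resolution0}) would bring the outline into line with the cited proof.
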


\rem If we forget\footnote{Note that the $\xi_k \otimes \Xi^{i/2}$-eigenspaces 
can easily be identified} 
the character $\xi_k$, the action of a geometric
Frobenius element is given by multiplication by some power of $q^{1/2}$ given by the degree
of the cohomology group we are looking for.

The representation $R_{\pi_{k,v}}(s,t_k)(r,i)$ is computed as follows: we first apply
the Jacquet functor $J_{P^{op}_{rg,d}}$ to $\speh_s(\st_{t_k}(\pi_{k,v}))$ 
which can be written as a sum 
$$\sum \langle a_1 \rangle \otimes \langle a_2 \rangle$$ 
where $a_1,a_2$ are multisegments in the Zelevinsky line of $\pi_v$; 
the precise computation is given in corollary 1.5.6 of \cite{boyer-compositio}. 
For $\tau_v$ an irreducible representation of $D_{v,h}^\times$,
we then consider
$$\begin{array}{rcl}
R_{\tau_v}: \groth \Bigl ( GL_{h}(F_v) \times GL_{d-h}(F_v) \Bigr ) & \longrightarrow &
\groth \Bigl ( D_{v,h}^\times/\DC_{v,h}^\times \times GL_{d-h}(F_v) \Bigr ) \\
~ \alpha \otimes \beta &  \mapsto & \mathrm{vol} (D_{v,tg}^\times/F_v^\times)^{-1} \sum_\psi \tr \alpha
(\varphi_{\tau \otimes \psi^\vee}) \psi \otimes \beta,
\end{array}$$
where $\psi$ describe the set of character of $F_v^\times$ and 
$\varphi_{\pi_v[t]_D}$ is, as defined by Deligne, Kazhdan and Vign\'eras,
a pseudo-coefficient for $\st_t(\pi_v)$. We then write
$$R_{\pi_{k,v}[r]_D} \Bigl ( \langle a_1 \rangle \Bigr ) a_2=\sum_\psi  \epsilon_\psi \psi \otimes \Pi_\psi \in 
\groth \Bigl ( F_v^\times \times GL_{(st-r)g}(F_v) \Bigr )$$
with $\epsilon_\psi \in \{ -1,1 \}$ and where
the $\psi$ such that $\Pi_\psi$ are non zero, looks like $|-|^{j/2}$ with $j \in \Zm$.
Then we have
$$R_{\pi_{k,v}}(s,t_k)(r,i)=\Pi_{|-|^{-i/2}}.$$

\rem We do not need the precise computation of $R_{\pi_{v}}(s,t)(r,i)$, we just want to
use the fact that if $\pi'_v$ is any irreducible cuspidal representation, then 
$R_{\pi'_{v}}(s,t)(r,i)$ is obtained from $R_{\pi_{v}}(s,t)(r,i)$ by replacing $\pi_v$
by $\pi'_v$ in its combinatorial description. In particular if
the modulo $l$ reduction of $\pi'_v$ is isomorphic to the modulo $l$ reduction
of $\pi_v$, then the modulo $l$ reduction of $R_{\pi_{v}}(s,t)(r,i)$ and those of
$R_{\pi'_{v}}(s,t)(r,i)$, are isomorphic.

The assumptions on $i$ in proposition 3.6.1 of \cite{boyer-compositio} 
are contained in the following definition of $m_{s,t}(r,i)$.

\begin{defin} \label{defi-m} (cf. \cite{boyer-aif} definition 3.7) \\
The point with coordinates $(r,i)$ such that $m_{s,t}(r,i)=1$, are contained
in the convex hull of the polygon with edges 
$(s+t-1,0)$, $(t,\pm (s-1))$ and $(1, \pm (s-t))$ if $s \geq t$ (resp.
$(t-s+1,0)$ if $t \geq s$); inside it for a fixed $r$, the indexes $i$ start from
the boundary en grows by $2$, i.e. $m_{s,t}(r,i)=1$ if and only if
\begin{itemize}
\item $\max \{ 1, s+t-1-2(s-1) \}  \leq r \leq s+t-1$;

\item if $t \leq r \leq s+t-1$ then $0 \leq |i| \leq s+t-1-r$ and $i \equiv s+t-1-r \mod 2$;

\item if $\max \{ 1,s+t-1-2(s-1) \} \leq r \leq t$ then $0 \leq |i| \leq s-1-(t-r)$ and
$i \equiv s-t-1+r \mod 2$,
\end{itemize}
as represented in figures \ref{fig-coho-m1} and \ref{fig-coho-m2}.
\end{defin}

\begin{figure}[ht]
\centering
\includegraphics[scale=.8]{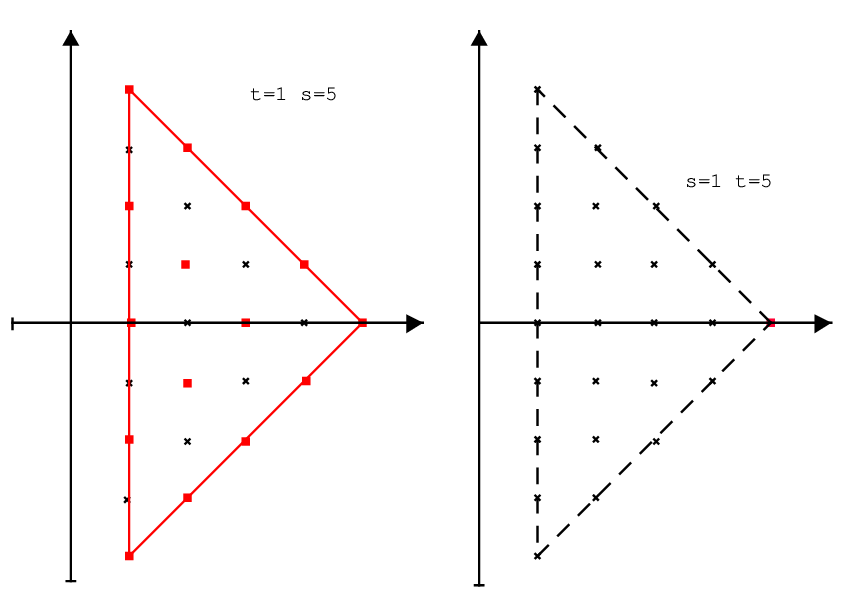}
\caption{\label{fig-coho-m1} The squares indicate the $(r,i)$ such that
$m_{s,t}(r,i)=1$ for a Speh ($t=1$) at left and a Steinberg ($s=1$) on the right}
\end{figure}

\begin{figure}[ht]
\centering
\includegraphics[scale=.8]{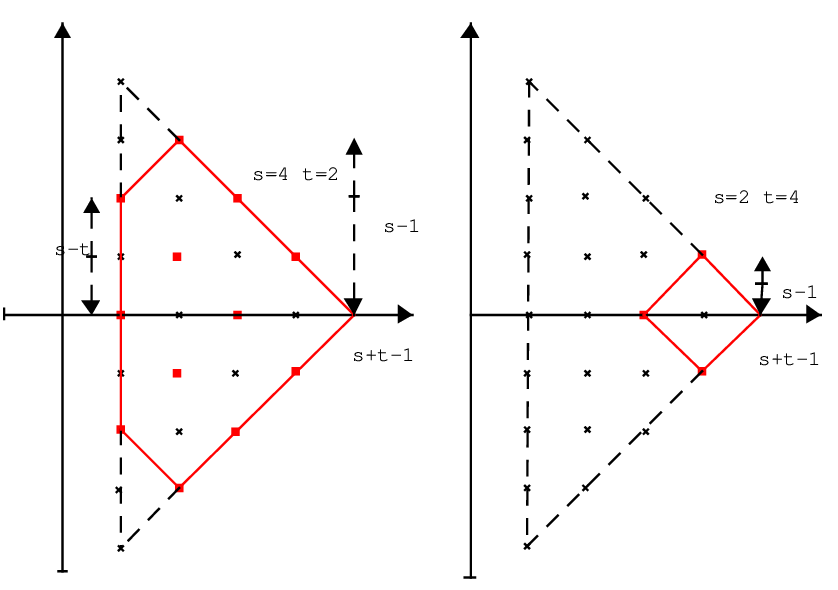}
\caption{\label{fig-coho-m2}  $m_{s,t}(r,i)=1$ when $s \geq t$ at left and 
$t \geq s$ on the right}
\end{figure}


\begin{figure}[ht]
\centering
\includegraphics[scale=.6]{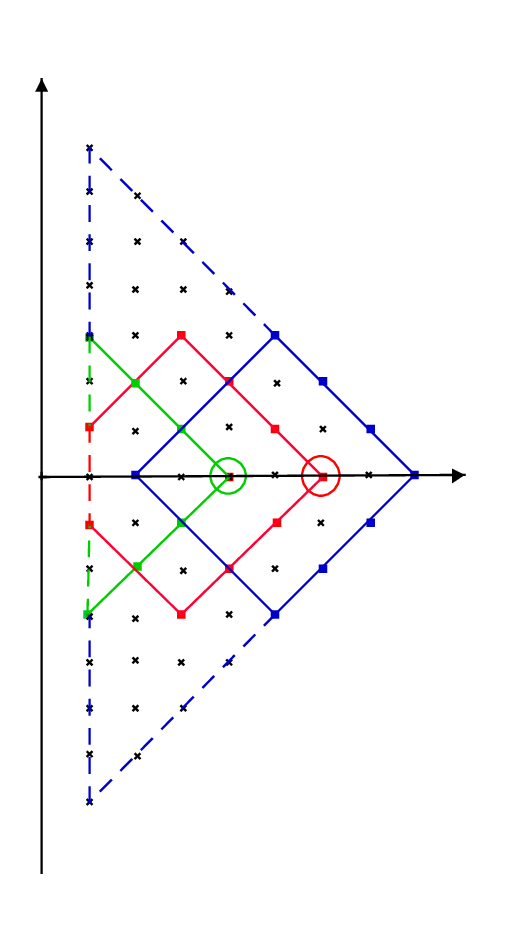}
\caption{\label{fig-coho-m4} Superposition to compute $m(r,i)$ for
$\Pi_v \simeq \speh_4(\pi_v) \times \speh_4(\st_3(\pi_v)) \times \speh_4(\st_5(\pi_v))$}
\end{figure}

\rem For
$$\Pi_v \simeq \speh_s(\st_{t_1}(\pi_{1,v})) \times \cdots \times \speh_s(\st_{t_u}(\pi_{u,v}))$$
the set of $(r,i)$ such that 
$[H^i_{!*}(HT_{\overline{1_{rg}},\xi}(\pi_v,\Pi_r))]\{ \Pi^{\oo,v} \} \neq 0$
is obtained by superposition of the $u$ previous diagrams as in the 
figure \ref{fig-coho-m4}. More precisely for a fixed $(r,i)$,
the contribution of diagram of $\speh_s(\st_{t_k}(\pi_{k,v}))$ is the same as
in the starting point $(s+t_{k}-1,0)$ after replacing $\speh_s(\st_{t_k}(\pi_{k,v}))$ by 
$R_{\pi_{k}}(s,t_{k})(r,i)$. We can then trace back any
$$R_{\pi_k}(s,t_k)(r,i)(\Pi'_v)\otimes \Bigl ( \xi_k \otimes \Xi^{i/2} \Bigr )$$ 
of $[H^i_{!*}(HT_{\overline{1_{rg}},\xi}(\pi_v,\Pi_r))]\{ \Pi^{\oo,v} \}$, 
to 
$$R_{\pi_k}(s+t_k,0)(r',0)(\Pi'_v) \otimes \Bigl ( \xi_k \otimes \Xi^{0} \Bigr )$$ 
of $[H^0(\lexp p j^{\geq (s+t_k)g}_{!*} \FC_{\bar \Qm_l,\xi}(\pi_v,s+t_k)_1[d-(s+t_k)g])]\{ \Pi^{\oo,v} \}$.
Note although that for $i=0$, some of the constituants of 
$[H^0_{!*}(HT_{\overline{1_{rg}},\xi}(\pi_v,\Pi_r))]\{ \Pi^{\oo,v} \}$ 
may or may not come from $r' >r$.

\noindent \textit{Comments about the example of figure \ref{fig-coho-m4} for $r=4$}: 
\begin{itemize}
\item $ \speh_4(\pi_v) \times \speh_4(\st_3(\pi_v)) \times R_{\pi_v}(4,5)(4,0)$ 
comes from $(8,0)$;

\item $ \speh_4(\pi_v) \times  R_{\pi_v}(4,3)(4,0) \times \speh_4(\st_5(\pi_v))$ 
comes from $(6,0)$;

\item $R_{\pi_v}(4,1)(4,0) \times \speh_4(\st_3(\pi_v)) \times \speh_4(\st_5(\pi_v))$ 
does not come from any $(r',0)$ for $r'>4$.
\end{itemize}

Concerning $[H^i_{c}(HT_{\overline{1_{rg}},\xi}(\pi_v,\Pi_r))]\{ \Pi^{\oo,v} \}$ we have
a exactly the same computation replacing the intergers $m_{s,t}(r,i)$ by
$n_{s,t}(r,i)$ defined as follows, cf. \cite{boyer-aif} definition 3.11.
The point with coordinates $(r,i)$ such that $n_{s,t}(r,i)=1$ are those in the convex
hull of the polygon with edges $(s+t-1,0)$, $(s,0)$, $(1,s-1)$ and $(t,s-1)$.

\begin{figure}[ht]
\centering
\includegraphics{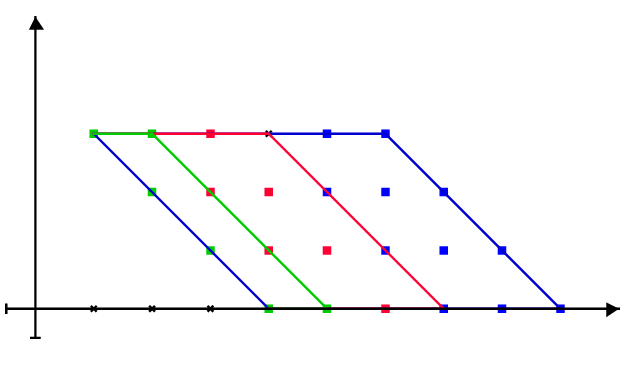}
\caption{\label{fig-coho-n4} Superposition to compute $n(r,i)$ for
$\Pi_v \simeq  \speh_4(\pi_v) \times \speh_4(\st_3(\pi_v)) \times \speh_4(\st_5(\pi_v))$}
\end{figure}

\begin{prop} (proposition 3.12 of \cite{boyer-aif}) \label{prop-hic} \\
Let $\pi_v$ be an irreducible cuspidal representation of $GL_g(F_v)$ and
$1 \leq r \leq d/g$. Then we have
\begin{multline*}
[H^i_{!c}(HT_{\overline{1_{rg}},\xi}(\pi_v,\Pi_r))]\{ \Pi^{\oo,v} \} = 
\frac{e_{\pi_v} \sharp \ker^1(\Qm,G)}{d} \\
\sum_{\atop{(s,t)}{\Pi \in \AC_{\xi,\pi_v}(s+t-1,s)}} 
\sum_{\Pi' \in \UC_{G,\xi}(\Pi^{\oo,v})} \sum_{m_{s,t}(r,i)=1}
m(\Pi') d_\xi(\Pi'_\oo) 
\Bigl ( \Pi_r \otimes S_{\pi_v}(s,t)(r,i)(\Pi_v) \Bigr )
\end{multline*}
where $S_{\pi_v}(s,t)(r,i)(\Pi_v)$ is a sum of representations of 
$GL_{d-rg}(F_v) \times \Zm$, 
for $\Pi_v \simeq  \speh_s(\st_{t_1}(\pi_{1,v})) \times \cdots \times 
\speh_s(\st_{t_u}(\pi_{u,v}))$, it is given by the formula
$$S_{\pi_v}(r,i)(\Pi_v)=\sum_{k:~\pi_{k,v} \sim_i \pi_{v}} n_{s,t_k}(r,i) 
R_{\pi_{v}}(s,t_k)(r,i)(\Pi_v,k) \otimes \Bigl ( \xi_k \otimes \Xi^{i/2} \Bigr )$$
with the notations of \ref{prop-hip} and $n_{s,t}(r,i) \in \{ 0, 1 \}$ is given above.
\end{prop}

We fix a level $I^v$ outside $v$ and a maximal ideal $\mathfrak m$
of $\Tm^S$: recall that fixing $\mathfrak m$ means that we focus on liftings 
$\widetilde{\mathfrak m} \subset \mathfrak m$, i.e. on $\xi$-cohomological
automorphic representations $\Pi$ of $G(\Am)$ such that their modulo $l$
Satake parameters are prescribed by $\mathfrak m$. As usual
we then denote with an
index $\mathfrak m$ for the localization $\otimes_{\Tm^S} \Tm^S_{\mathfrak m}$.

\section{Integral Harris-Taylor perverse sheaves}

Let $\pi_v$ be an irreducible supercuspidal representation $\pi_v$ defined over some finite
extension $L/\Qm_l$.
When its modulo $\varpi_L$ reduction is still supercuspidal, then for any
$t \geq 1$, the representation $\pi_v[t]_D$, also defined over $L$, 
remains irreducible modulo $\varpi_L$ so that
$\LC(\pi_v[t]_D)_{\overline{1_{tg}}}$ has an unique, up to homothety, stable
$\OC_L$-lattice. For any $\OC_L$-representation $\Pi_t$ 
of $GL_{tg}(F_v)$,
we have then a well defined $\OC_L$-local system $HT_{\OC_L}(\pi_v,\Pi_t)$.

\rem $\Pi_t$ is called the infinitesimal part of the Harris-Taylor local system and
it does not play any role here, we only mention it as it appears everywhere
in \cite{boyer-aif} or \cite{boyer-invent2}.

Over $\OC_L$, we have two natural $t$-structures denoted $p$ and $p+$
which are exchanged by the Grothendieck-Verdier duality. 
We then have two notions of intermediate extension,
$\lexp p j_{!*}$ and $\lexp {p+} j_{!*}$. In \cite{boyer-torsion} we explain,
using the Newton stratification, how to construct $\OC_L$-filtrations
of perverse sheaves, with torsion free graded parts. In \cite{boyer-duke}, read the introduction of loc. cit. for the statement of the results, we
prove the following result.

\begin{thm}
When the modulo $\varpi_L$ 
reduction of $\pi_v$ remains supercuspidal, the two intermediate
extensions $\lexp p j^{=tg}_{!*} HT_{\OC_L}(\pi_v,\Pi_t)$ and 
$\lexp {p+} j^{=tg}_{!*} HT_{\OC_L}(\pi_v,\Pi_t)$ are equals.
\end{thm}

\rem When $\pi_v$ is a character this result is essentially formal otherwise it is rather difficult to prove it.
When the modulo $\varpi_L$ reduction of $\pi_v$ is no more supercuspidal, in loc. cit.
we describe the $\varpi_L$-torsion of the cokernel of
$\lexp p j^{=tg}_{!*} HT_{\OC_L}(\pi_v,\Pi_t) \hookrightarrow \lexp {p+} j^{=tg}_{!*} 
HT_{\OC_L}(\pi_v,\Pi_t)$.

The following resolution of $\lexp p j_{!*}^{=tg} HT(\pi_{v},\Pi_t)$, proved
over $\overline \Qm_l$ in \cite{boyer-invent2}, is still valid over $\OC_L$, cf. \cite{boyer-duke}:
\addtocounter{thm}{1}
\begin{multline} \label{eq-resolution0}
0 \rightarrow j_!^{=sg} HT_{\OC_L}(\pi_{v},\Pi_t \{ \frac{t-s}{2} \}  \times 
\speh_{s-t}(\pi_{v}\{ t/2 \} ))
 \otimes \Xi^{\frac{s-t}{2}} \longrightarrow \cdots  \\
\longrightarrow j_!^{=(t+1)g} HT_{\OC_L}(\pi_{v},\Pi_t \{ -1/2 \}  \times \pi_{v} \{ t/2 \} ) 
\otimes \Xi^{\frac{1}{2}} \longrightarrow \\ j_!^{=tg} HT_{\OC_L}(\pi_{v},\Pi_t) 
\longrightarrow  \lexp p j_{!*}^{=tg} HT_{\OC_L}(\pi_{v},\Pi_t) \rightarrow 0.
\end{multline}
This resolution is equivalent to the fact that the cohomology sheaves of 
$\lexp p j_{!*}^{=tg} HT_{\OC_L}(\pi_{v},\Pi_t)$ are torsion free. 

Note that when the modulo $\varpi_L$ reduction of $\pi_v$ remains supercuspidal, the
lattices of the Harris-Taylor local systems are uniquely well defined. Otherwise,
in the previous formula any stable lattice of $HT_{\OC_L}(\pi_v,\Pi_t)$ then define
a uniquely well defined lattice of the others Harris-Taylor local systems of the formula:
we do not need to precise them in the following and we prefer not to introduce symbols
in the notation to explicit this fact.

We have a filtration 
\addtocounter{thm}{1}
\begin{multline} \label{eq-fil!}
\Fil^{(t-s)}_!(\pi_v,\Pi_t) \hookrightarrow \Fil^{t-s+1}_!(\pi_v,\Pi_t) \hookrightarrow 
\cdots  \\
\hookrightarrow \Fil^0_!(\pi_v,\Pi_t)=j^{= tg}_{!} HT_{\OC_L}(\pi_v,\Pi_t),
\end{multline}
with graded parts 
$$\gr^{-\delta}_!(\pi_v,\Pi_t) \simeq 
\lexp p j^{\geq (t+\delta)g}_{!*} HT_{\OC_L}(\pi_v,\Pi_t \overrightarrow{\times}
\st_\delta(\pi_v)) \otimes \Xi^{\delta/2}.$$

\section{Torsion in the cohomology of a HT perverse sheaf}

\begin{lemma} 
The action of a geometric Frobenius element on
$$H^{i}_{tor}(\sh_{I,\bar s_v},\lexp p j^{=tg}_{!*} HT_{\OC_L}(\pi_v,\Pi_t)) \otimes_{\OC_L} \Fm_L$$
or on
$$H^{i}_{tor}(\sh_{I,\bar s_v},\lexp p j^{=tg}_{!} HT_{\OC_L}(\pi_v,\Pi_t)) \otimes_{\OC_L} \Fm_L$$
decomposes these spaces into isotypic subspaces where it acts through
$\xi_k(1) q^{r/2}$ for various integers $r \in \Zm$ and where the $\xi_k$ are as in proposition
\ref{prop-hip}.
%
\end{lemma}

Precisely we mean that the torsion submodules decompose into a direct sum of 
$\OC_L/\varpi_L^a \OC_L$
for various $a \geq 1$ where the geometric Frobenius element acts through some $\xi_k(1) q^{r/2}$
where $\xi_k$ modulo $l^a$ can be recovered by considering the action of $GL_d(F_v)$.

\begin{proof}
We argue by induction on $t$ from $s_g$ to $1$. For $t=s_g$ the cohomology groups are torsion
free so the result follows from the description of the free quotient given by proposition \ref{prop-hip}.

Suppose now that the lemma is true for every $t > t_0$. We then look at the 
$\OC_L$-cohomology of $\lexp p j^{=t_0g}_{!*} HT_{\OC_L}(\pi_v,\Pi_{t_0}))$ through 
the spectral sequence associated to the resolution (\ref{eq-resolution0}).
Note that all the $E_1^{p,q}$ terms verify the property of the lemma except maybe for the
$H^i(\sh_{I,\bar s_v},\lexp p j^{=t_0g}_{!} HT_{\OC_L}(\pi_v,\Pi_{t_0}))$ when $i>0$: indeed these
cohomology groups are zero for $i<0$ and torsion free for $i=0$. We then deduce that all the
$E_1^{p,q}$ satisfy the property when $p+q \leq 0$ so that it is the same for
$H^i(\sh_{I,\bar s_v},\lexp p j^{=t_0g}_{!} HT_{\OC_L}(\pi_v,\Pi_{t_0}))$ when $i \leq 0$ and by duality,
as $\lexp p j^{=t_0g}_{!} HT_{\OC_L}(\pi_v,\Pi_{t_0}) \simeq 
\lexp {p+} j^{=t_0g}_{!} HT_{\OC_L}(\pi_v,\Pi_{t_0})$,
it is true for all $i$.

We then look at the spectral sequence associated to the resolution (\ref{eq-fil!}) where now all the
$E_1^{p,q}$ terms verify the property so that it is the same for the 
$H^i(\sh_{I,\bar s_v},\lexp p j^{=t_0g}_{!} HT_{\OC_L}(\pi_v,\Pi_{t_0}))$.

\end{proof}

Consider now two cuspidal representations $\pi_v$ and $\pi'_v$, defined over $\OC_L$
for some finite extension $L/\Qm_l$, with isomorphic modulo 
$\varpi_L$-reduction of $GL_g(F_v)$ with $s_g:=\lfloor \frac{d}{g} \rfloor \geq 1$. 
For every $1 \leq t \leq s_g$ the two Harris-Taylor local systems $HT_{\Fm_L}(\pi_v,t)$ and 
$HT_{\Fm_L}(\pi'_v,t)$
are then isomorphic and from the main result of \cite{boyer-duke} we have
$$\lexp p j^{=tg}_{!*} HT_{\OC_L}(\pi_v,t) \simeq \lexp {p+} j^{=tg}_{!*} HT_{\OC_L}(\pi_v,t)$$
and
$$\lexp p j^{=tg}_{!*} HT_{\OC_L}(\pi'_v,t) \simeq \lexp {p+} j^{=tg}_{!*} HT_{\OC_L}(\pi'_v,t),$$
which implies
$$\Fm \lexp p j^{=tg}_{!*} HT_{\OC_L}(\pi_v,t) \simeq \lexp {p} j^{=tg}_{!*} \Fm HT_{\OC_L}(\pi_v,t)
\simeq \Fm \lexp p j^{=tg}_{!*} HT_{\OC_L}(\pi_v,t) .$$
We then have the following diagram
$$\xymatrix{
\Fm H^{i}(\sh_{I^v,\bar s_v},\lexp p j^{=tg}_{!*} HT_{\OC_L}(\pi_v,\Pi_t)) \ar@{^{(}->}[d] &
\Fm H^{i}(\sh_{I^v,\bar s_v},\lexp p j^{=tg}_{!*} HT_{\OC_L}(\pi'_v,\Pi_t)) \ar@{^{(}->}[d] 
\\
H^{i}(\sh_{I,\bar s_v},\lexp p j^{=tg}_{!*} \Fm HT_{\OC_L}(\pi_v,\Pi_t)) \ar@{->>}[d]  \ar[r]_{\sim}^{\iota_i}&
H^{i}(\sh_{I,\bar s_v},\lexp p j^{=tg}_{!*} \Fm HT_{\OC_L}(\pi'_v,\Pi_t)) \ar@{->>}[d] &
\\ H^{i+1}(\sh_{I,\bar s_v},\lexp p j^{=tg}_{!*} HT_{\OC_L}(\pi_v,\Pi_t))[\varpi_L] & 
H^{i+1}(\sh_{I,\bar s_v},\lexp p j^{=tg}_{!*} HT_{\OC_L}(\pi'_v,\Pi_t))[\varpi_L]
}$$

\begin{lemma} \label{lem-torsion-min}
For any $i$, 
in the Grothendieck group of $\overline \Fm_l[GL_d(F_v)]$-admissible modules, the image of
$$\lim_{\atop{\rightarrow}{n}} H^{i+1}(\sh_{I^v(n),\bar s_v},\lexp p j^{=tg}_{!*} HT_{\OC_{L_n}}(\pi_v,\Pi_t))_{\mathfrak m}[\varpi_{L_n}]
\otimes_{\Fm_L} \overline \Fm_l$$
is equal to those of  
$$\lim_{\atop{\rightarrow}{n}} H^{i+1}(\sh_{I^v(n),\bar s_v},\lexp p j^{=tg}_{!*} HT_{\OC_{L_n}}(\pi'_v,\Pi_t))_{\mathfrak m}[\varpi_{L_n}]
\otimes_{\Fm_L} \overline \Fm_l,$$
where $L_n/\Qm_l$ is a finite extension big enough so that everything is well defined over it.
\end{lemma}

\begin{proof}
We first argue at finite level $I^v(n)$ using the above isomorphisms $\iota_i$.
Recall that each cohomology group is the extension of its free quotient by its
torsion sub-module:
\begin{multline*}
H^{i}_{tor}(\sh_{I^v(n),\bar s_v},\lexp p j^{=tg}_{!*} HT_{\OC_L}(\pi_v,\Pi_t)) \hookrightarrow
H^{i}(\sh_{I^v(n),\bar s_v},\lexp p j^{=tg}_{!*} HT_{\OC_L}(\pi_v,\Pi_t)) \\ 
\twoheadrightarrow
H^{i}_{free}(\sh_{I^v(n),\bar s_v},\lexp p j^{=tg}_{!*} HT_{\OC_L}(\pi_v,\Pi_t)).
\end{multline*}
The action of a geometric Frobenius $\frob_v^{-1}$ 
on the free quotient is well understood as it is pure and so given by multiplication by
$\xi_k(1)q^{-i/2}$. To identify
the torsion submodules through $\iota_i$ we then focus on the action of $\frob_v$ on it. To do it
we first consider $i_0$ such that for every $i > i_0$ the torsion submodules of 
$H^{i}(\sh_{I^v(n),\bar s_v},\lexp p j^{=tg}_{!*} HT_{\OC_L}(\pi_v,\Pi_t))$ and
$H^{i}(\sh_{I^v(n),\bar s_v},\lexp p j^{=tg}_{!*} HT_{\OC_L}(\pi'_v,\Pi_t))$ are zero whatever is 
$n$.\footnote{Note that, cf. \cite{aaa}, $i_0 = s-t$ always work for example.}
Then for every $r \neq i_0$, note that $\iota_{i_0}$ identifies the parts of 
$$\Fm H^{i_0}(\sh_{I^v(n),\bar s_v},\lexp p j^{=tg}_{!*} HT_{\OC_L}(\pi_v,\Pi_t))$$ 
where $\frob_v$
acts through $\xi_k(1) q^{r/2}$ with the same part of
$$\Fm H^{i_0}(\sh_{I^v(n),\bar s_v},\lexp p j^{=tg}_{!*} HT_{\OC_L}(\pi'_v,\Pi_t)).$$ 
By purity these parts come from
the torsion submodule of the integral cohomology group.

We then focus on the part where $\frob_v$ acts by multiplication by $\xi_k(1) q^{i_0/2}$ and we want to
prove that for every $i \leq i_0$, the image of map the associated part of 
$$\lim_{\atop{\rightarrow}{n}} H^{i}(\sh_{I^v(n),\bar s_v},\lexp p j^{=tg}_{!*} 
HT_{\OC_{L_n}}(\pi_v,\Pi_t))_{\mathfrak m}[\varpi_{L_n}] \otimes_{\Fm_L} \overline \Fm_l$$ 
in the Grothendieck group of $\overline \Fm_l[GL_d(F_v)]$-admissible modules, is equal to those of
$$\lim_{\atop{\rightarrow}{n}} H^{i}(\sh_{I^v(n),\bar s_v},\lexp p j^{=tg}_{!*} HT_{\OC_{L_n}}(\pi'_v,\Pi_t))[\varpi_{L_n}] \otimes_{\Fm_L} \overline \Fm_l.$$ 
To do so we argue by induction from $i=-i_0$ to $i_0$.

- For $i=-i_0$, by Grothendieck-Verdier duality, 
$$H^{i}(\sh_{I^v(n),\bar s_v},\lexp p j^{=tg}_{!*} HT_{\OC_{L_n}}(\pi_v,\Pi_t)) \hbox{ and } 
H^{i}(\sh_{I^v(n),\bar s_v},\lexp p j^{=tg}_{!*} HT_{\OC_{L_n}}(\pi'_v,\Pi_t))$$ 
are torsion free. 
As the free quotient does not contribute to the $\xi_k(1) q^{i_0/2}$-eigenspaces of $\frob_v$
in $H^{i}(\sh_{I^v(n),\bar s_v},\lexp p j^{=tg}_{!*} \Fm HT_{\OC_{L_n}}(\pi_v,\Pi_t))$, we then deduce that
$\iota_{-i_0}$ identifies the $\xi_k(1) q^{i_0/2}$-eigenspaces of $\frob_v$ inside
$H^{i+1}(\sh_{I^v(n),\bar s_v},\lexp p j^{=tg}_{!*} HT_{\OC_{L_n}}(\pi_v,\Pi_t))[\varpi_{L_n}]$ with those of
$H^{i+1}(\sh_{I^v(n),\bar s_v},\lexp p j^{=tg}_{!*} HT_{\OC_{L_n}}(\pi'_v,\Pi_t))[\varpi_{L_n}]$.

- Suppose the statement true for $i$ and consider $i+1\leq i_0-1$. Note first that
the free quotients do not contribute. The torsion we are interested in, is a direct sum 
of $\OC_{L_n}/\varpi_L^k \OC_{L_n}$
where the action of $\frob_v$ is given by $\xi_k(1) q^{i_0/2}$
so that the associated $\varpi_{L_n}$-torsion is 
$\varpi_{L_n}^{k-1} \OC_{L_n}/\varpi_{L_n}^k \OC_{L_n}$ and its
image by $\Fm$ is $\Fm_{L_n}$. The
$\xi_k(1) q^{i_0/2}$-eigenspaces of $\frob_v$ inside the torsion of
$$H^{i+1}(\sh_{I^v(n),\bar s_v},\lexp p j^{=tg}_{!*} HT_{\OC_{L_n}}(\pi_v,\Pi_t))_{\mathfrak m},$$ 
$$\hbox{resp. } 
H^{i+1}(\sh_{I^v(n),\bar s_v},\lexp p j^{=tg}_{!*} HT_{\OC_{L_n}}(\pi'_v,\Pi_t))_{\mathfrak m}$$ 
can be written
$$\bigoplus_{i=1}^{a_n} \OC_{L_n}/\varpi_{L_n}^{k_i} \OC_{L_n}, \quad \hbox{resp. }
\bigoplus_{i=1}^{a'_n} \OC_{L_n}/\varpi_{L_n}^{k'_i} \OC_{L_n}$$
where, by the induction hypothesis, $a_n=a'_n$ and such that, after taking the
$\varpi_{L_n}$-torsion, extending the scalar to $\overline \Fm_l$ and the inductive limit over $n$,
the associated image in the Grothendieck group
of $\overline \Fm_l[GL_d(F_v)]$-admissible modules, are equal.

We then deduce, as the free quotients do not interfere and using the equivariance of the
$\iota_i$, the $\xi_k(1) q^{i_0/2}$-eigenspaces of $\frob_v$ inside
$\Fm H^{i+1}(\sh_{I^v(n),\bar s_v},\lexp p j^{=tg}_{!*} HT_{\OC_{L_n}}(\pi_v,\Pi_t))_{\mathfrak m}$ 
organize themselves when $n$ varies, such that after extending the scalars to $\overline \Zm_l$,
its image in the Grothendieck group of $\overline \Fm_l[GL_d(F_v)]$-admissible modules is
equal to those given by the
$\Fm H^{i+1}(\sh_{I^v(n),\bar s_v},\lexp p j^{=tg}_{!*} HT_{\OC_{L_n}}(\pi'_v,\Pi_t))_{\mathfrak m}$.
Using the vertical short exact sequences of the previous diagram, we then deduce that
the $\xi_k(1) q^{i_0/2}$-eigenspaces of $\frob_v$ of 
$H^{i+2}(\sh_{I^v(n),\bar s_v},\lexp p j^{=tg}_{!*} HT_{\OC_{L_n}}(\pi_v,\Pi_t))[\varpi_{L_n}]$ 
after extending the scalar to $\overline \Fm_l$ and the inductive limit over $n$, has an image in
 the Grothendieck group of $\overline \Fm_l[GL_d(F_v)]$-admissible modules which is equal to those
given by
$H^{i+2}(\sh_{I^v(n),\bar s_v},\lexp p j^{=tg}_{!*} HT_{\OC_{L_n}}(\pi'_v,\Pi_t))[\varpi_{L_n}]$.

- In particular for $i=i_0-1$ we know that the $\xi_k(1) q^{i_0/2}$-eigenspace of $\frob_v$ of 
$H^{i_0}(\sh_{I^v(n),\bar s_v},\lexp p j^{=tg}_{!*} HT_{\OC_{L_n}}(\pi_v,\Pi_t))_{\mathfrak m}
[\varpi_{L_n}]$, after tensoring to $\overline \Fm_l$ and taking the inductive limit over $n$,
has an image in the Grothendieck group of $\overline \Fm_l[GL_d(F_v)]$-admissible modules,
which is equal to those given by the
$H^{i_0}(\sh_{I^v(n),\bar s_v},\lexp p j^{=tg}_{!*} HT_{\OC_{L_n}}(\pi'_v,\Pi_t))_{\mathfrak m}
[\varpi_{L_n}]$.

Finally we can forget about the torsion for the cohomology groups of index $i_0$ and argue by induction
to obtain the result.
\end{proof}

\section{Automorphic congruences}
\label{para-congruence}

Consider 
\begin{itemize}
\item two irreducible algebraic representations $\xi$ and $\xi'$ defined over
some finite extension $L/\Qm_l$ with stable lattice $W_{\xi,\OC_L}$
and $W_{\xi',\OC_L}$ such that they become isomorphic modulo $\lambda$
any uniformizer of $\OC_L$;

\item two irreducible cuspidal representations 
$\pi_v$ and $\pi'_v$, defined over $\OC_L$, such that their modulo $\varpi_L$ reduction are isomorphic
and supercuspidal,

\item and a maximal ideal $\mathfrak m$ of $\Tm^S$.
\end{itemize}
For $V$ a $\OC_L$-free module, we denote by 
$$r_l(V)=V \otimes_{\OC_L} \Fm_L \otimes_{\Fm_L} \overline \Fm_l$$ 
its modulo $l$ reduction. When $V$ is a $\overline \Zm_l$-free module, we denote
$r_l(V)$ for the associated $\overline \Fm_l$-vector space.

\begin{nota}
For $\psi_v$ a $\Fm_L$-representation of $GL_h(F_v)$,
we denote by $\dim_{n} \psi$ the set
$$\Bigl \{ \dim_{\Fm_L} \psi_v^{K_v(n)},~n \in \Nm \Bigr \}.$$
\end{nota}

\begin{thm} \label{thm-main} (cf. conjecture 5.2.1 of \cite{boyer-aif}) \\
For every $r \geq 1$, we have the following equality
\addtocounter{thm}{1}
\begin{multline}
\label{eq-thm}
\sum_{\Pi \in \AC_{\xi,\pi_v}(r,s)} m(\Pi) d_\xi(\Pi_\oo)  
\dim_{\overline \Qm_l} (\Pi^{\oo,v})^{I^v} \dim_{n} r_l\bigl ( 
R_{\pi_v}(r,r)(\Pi_v) \bigr ) \\ = \\
\sum_{\Pi' \in \AC_{\xi',\pi'_v}(r,s)} m(\Pi') d_\xi(\Pi'_\oo)  
\dim_{\overline \Qm_l} (\Pi^{',\oo,v})^{I^v} \dim_{n} r_l\bigl ( 
R_{\pi_v}(r,r)(\Pi'_v) \bigr ).
\end{multline}
\end{thm}

A qualitative version of the previous theorem could be formulated as follows.
\begin{itemize}
\item Given an irreducible $\xi$-cohomological
automorphic representation $\Pi$ of $G(\Am)$ such that
$\Pi_v \simeq \speh_s(\st_t(\pi_v)) \times ?$, with $\pi_v$ cuspidal
with modulo $l$ reduction still supercuspidal,

\item and $\pi'_v$ such that its modulo $l$ reduction is isomorphic to those of
$\pi_v$,
\end{itemize}
then there exists an irreducible $\xi'$-cohomological automorphic representation
$\Pi'$ such that
\begin{itemize}
\item outside $v$, $\Pi$ and $\Pi'$ share the same level $I^v$,

\item their also share the same modulo $\varpi_L$ Satake parameters\footnote{where $L/\Qm_l$
is a finite extension such that $\xi$, $\pi_v$ and $\pi'_v$ are defined for some fixed finite level} 
at the places outside
$S$, i.e. they are weakly automorphic congruent,

\item at $v$, we have $\Pi'_v \simeq \speh_s(\st_t(\pi_v)) \times ?'$,
\end{itemize}
where by convention, the symbols $?$ and $?'$ mean 
any representation we do not want to precise.
%
%

\begin{proof}
We first consider \textit{the case where $r=r_{\max}$ is maximal} such that there exists $s$ and 
$\widetilde{\mathfrak m} \subset \mathfrak m$ with 
$\Pi_{\widetilde{\mathfrak m}} \in \AC_{\xi,\pi_v}(r,s)$
and $\Pi_{\widetilde{\mathfrak m},S}^{I^v} \neq (0)$. 
We then look at the free quotient of
$H^0(\sh_{I^v(n),\bar s_v},\lexp p j^{=rg}_{!*}HT_{\OC_{L_n},\xi}(\pi_v,\Pi_t))_{\mathfrak m}$ which 
can be described, after tensoring with $\overline \Qm_l$, as the sum of the contributions of the irreducible
automorphic representations of $\AC_{\xi,\pi_v}(r,s)$ with $r=s+t-1$ for
some finite set 
\begin{multline*}
\BC_\xi(\pi_v,r):=
\Bigl \{ (s,t) \hbox{ s. t. } s+t-1=r, \\ 
\exists \widetilde{\mathfrak m} \subset \mathfrak m,
\Pi_{\widetilde{\mathfrak m}} \in \AC_{\xi,\pi_v}(r,s) \hbox{ and }
(\Pi_{\widetilde{\mathfrak m}}^{\oo,v})^{I^v} \neq (0) \Bigr \}.
\end{multline*}
The qualitative version of the theorem asks to prove that 
$\BC_\xi(\pi_v,r)=\BC_{\xi'}(\pi'_v,r)$ for every $r$,
and the quantitative one then follows from the formula of the multiplicities
in \ref{prop-hip}.

(a) Consider first the elements $(s,t) \in \BC_{\xi}(\pi_v,r_{\max})$.
 The image, in the Grothendieck group of $\overline \Fm_l[GL_d(F_v)]$-admissible modules, of
\begin{multline*}
[\lim_{\atop{\rightarrow}{n}} H^0(\sh_{I^v(n),\bar s_v},\lexp p j^{=rg}_{!*}HT_{\OC_{L_n},\xi}(\pi_v,\Pi_t))_{\mathfrak m} \otimes_{\OC_{L_n}} \Fm_{L_n} \otimes_{\Fm_{L_n}} \overline \Fm_l] \\
=[\lim_{\atop{\rightarrow}{n}}H^*(\sh_{I^v(n),\bar s_v},\lexp p j^{=rg}_{!*}HT_{\OC_{L_n},\xi}(\pi_v,\Pi_t))_{\mathfrak m}\otimes_{\OC_{L_n}} \Fm_{L_n} \otimes_{\Fm_{L_n}} \overline \Fm_l]
\end{multline*}
is equal to that given by the
$H^*(\sh_{I^v(n),\bar s_v},\lexp p j^{=rg}_{!*}HT_{\OC_{L_n},\xi'}(\pi'_v,\Pi_t))_{\mathfrak m}$
which is also equal to that given by the
$H^0(\sh_{I^v(n),\bar s_v},\lexp p j^{=rg}_{!*}HT_{\OC_{L_n},\xi'}(\pi'_v,\Pi_t))_{\mathfrak m}$.
We can then conclude, as their torsion parts are equal in the Grothendieck group
of $\overline \Fm_l[GL_d(F_v)]$-admissible modules\footnote{Note in this case,
that the two cohomology groups are torsion free},
from the explicit computation of the multiplicities in \ref{prop-hip}, that the contribution of the
elements of $\BC_{\xi}(\pi_v,r_{\max})$ is the same as those of $\BC_{\xi'}(\pi'_v,r_{\max})$, 
which
corresponds to the quantitative version of our statements for all elements $(s,t)$ such that
$s+t-1=r_{\max}$.

(b) To separate the contributions of the $(s,t) \in \BC_{\xi}(\pi_v,r_{\max})$,
we now focus on the sequence of the dimensions 
$$d_{k,n}(\pi_v)=\dim_{\overline \Fm_l} H^k(\sh_{I^v(n),\bar s_v},\lexp p j^{=(r-k)g}_{!*}
HT_{\OC_{L_n},\xi}(\pi_v,\mathds 1_{r-k}))_{\mathfrak m} \otimes_{\OC_{L_n}} \overline \Fm_l,$$
where $\mathds 1_{r-k}$ is the trivial representation of $GL_{(r-k)g}(F_v)$, for
$k=0,\cdots,r-1$ and $n \in \Nm$. More specifically we will focus on the 
$d_{k,n}(\pi_v)-d_{k+1,n}(\pi_v)$. Note the following facts:
\begin{itemize}
\item By maximality of $r=r_{\max}$, we have
$$H^{k+1}(\sh_{I^v(n),\bar s_v},\lexp p j^{=(r-k)g}_{!*}
HT_{\OC_{L_n},\xi}(\pi_v,\mathds 1_{r-k}))_{\mathfrak m} =(0)$$ 
so that
\begin{multline*}
H^k(\sh_{I^v(n),\bar s_v},\lexp p j^{=(r-k)g}_{!*}
HT_{\OC_{L_n},\xi}(\pi_v,\mathds 1_{r-k}))_{\mathfrak m} \otimes_{\OC_{L_n}} \overline \Fm_l
\\ \simeq H^k(\sh_{I^v(n),\bar s_v},j^{=(r-k)g}_{!*}
HT_{\overline \Fm_l,\xi} (\pi_v,\mathds 1_{r-k}))_{\mathfrak m}.
\end{multline*}

\item  By lemma \ref{lem-torsion-min}, the modulo $\varpi_{L_n}$ reduction
of the torsion of 
$$H^k(\sh_{I^v(n),\bar s_v},\lexp p j^{=(r-k)g}_{!*}
HT_{\OC_{L_n},\xi}(\pi_v,\mathds 1_{r-k}))_{\mathfrak m},$$
after extending the scalar to $\overline \Fm_l$ and taking the inductive limit over $n$, has its
image in the Grothendieck group of $\overline \Fm_l[GL_d(F_v)]$-admissible modules, equals to
that given by the the modulo $\varpi_{L_n}$ reduction of 
$H^k(\sh_{I^v(n),\bar s_v},\lexp p j^{=(r-k)g}_{!*}
HT_{\OC_{L_n},\xi'}(\pi'_v,\mathds 1_{r-k}))_{\mathfrak m}$.\footnote{More generally, lemma
\ref{lem-torsion-min} tells us that, in the following, we can argue as if the various cohomology groups
were torsion free.}

\item From \ref{prop-hip} and the description in \ref{defi-m} of the $m_{s,t}(r,i)$,
the behavior of the contribution to the sequence $d_{k,n}(\pi_v)-d_{k+1,n}(\pi_v)$ 
of the modulo $l$ reduction 
of the free quotients of each cohomology groups is completely determined by
$\BC_\xi(\pi_v,r_{\max})$. Indeed note that the contribution of some automorphic representation
$\Pi$ such that $\Pi_v \simeq \speh_{s}(\st_t(\pi_v)) \times ?$ with $s+t-1=r$,
only contributes to $d_{k,n}(\pi_v)$ for $k=0,\cdots, s-1$ so that 
$d_{s-1,n}(\pi_v)-d_{s,n}(\pi_v)$,
after eliminating the contribution of the torsion part which does not interfere thanks to lemma
\ref{lem-torsion-min}, will detect such $\Pi$.
\end{itemize}
\noindent
- We can then infer the elements
$(s,t) \in \BC_\xi(\pi_v,r_{\max})$ (resp. $\BC_{\xi'}(\pi'_v,r_{\max})$) for $s \geq 2$ and 
prove the quantitative
previous theorem for $s \geq 2$. 
\\
- Concerning the contribution of the elements of $\AC_{\xi,\pi_v}(r_{\max},1)$, we look
at $(d_{0,n}(\pi_v))_{n \in \Nm}$.
As we have seen that the contributions to the free quotients of the elements of
$\AC_{\xi,\pi_v}(r_{\max},s)$ for $s \geq 2$ coincide to the contributions to the free quotients
of the elements of $\AC_{\xi',\pi'_v}(r_{\max},s)$, we obtain that the remaining part given
by the elements of $\AC_{\xi,\pi_v}(r_{\max},1)$ coincides with those of $\AC_{\xi',\pi'_v}(r_{\max},1)$.

\medskip

At the end of this initialization step we proved that the contribution of the elements of
$\bigcup_{s \geq 1} \AC_{\xi,\pi_v}(r_{\max},s)$
coincide with those of the elements of $\bigcup_{s \geq 1} \AC_{\xi',\pi'_v}(r_{\max},s)$. 
We then can forget the contribution of these elements, in other words, we can now
suppose that the new $r_{\max}$ is strictly less than the real one and we can resume
the arguments.

More precisely, we argue by induction on $r$ from $r=r_{\max}$ to $r=1$ by assuming 
the statement of the theorem is true for every $r \geq k$ which is equivalent to say that
\begin{itemize}
\item for every $i,j$, the contribution of 
$\AC_{\xi,\pi_v}(k,s)$for $r \leq k \leq r_{\max}$ and $s \geq 1$
to the space given, after extending the scalar to $\overline \Fm_l$ and taking the inductive limit over
$n$, by modulo $\varpi_{L_n}$ reduction of the free quotient of $H^i(\sh_{I^v(n),\bar s_v},
\lexp p j^{=j g}_{!*} HT_{\OC_{L_n},\xi}(\pi_v,\Pi_j))_{\mathfrak m}$ 
\item is equal to those of $\AC_{\xi',\pi'_v}(k,s)$
to the space constructed from the modulo $\varpi_{L_n}$ reduction of the free quotient of 
$$H^i(\sh_{I^v(n),\bar s_v},
\lexp p j^{=j g}_{!*} HT_{\OC_{L_n},\xi'}(\pi'_v,\Pi_j))_{\mathfrak m}.$$
\end{itemize}

\rem Recall that lemma \ref{lem-torsion-min} tells us that we can argue as if all cohomological
groups were torsion free. Note also that in the range $r \leq i+k \leq r_{\max}$, in 
the free quotient of $H^i(\sh_{I^v(\oo),\bar s_v},\lexp p j^{=kg}_{!*}
HT_\xi(\pi_v,\Pi_k))_{\mathfrak m}$, only contributes elements
of $\AC_{\xi,\pi_v}(k,s)$ for $k\geq r$.

We can then forget about these elements and resume the previous arguments pretending that
we are in the case where $r_{\max}=r-1$ to conclude that the result is true for $r-1$.
\end{proof}

\section{Proof of conjecture 5.10 of \cite{boyer-aif}}

\begin{thm} 
Let $\varrho$ be any irreducible cuspidal representation of $GL_g(F_v)$
with $1 \leq g \leq d$.
For every irreducible cuspidal representation $\pi_v$ with $r_l(\pi_v) \simeq \varrho$,
for every $1 \leq t \leq d/g$ and for every $i$
$$\lim_{\atop{\rightarrow}{n}} H_{\tor}^i(\sh_{I^v(n),\bar s_v},j^{=tg}_! HT_{\OC_{L_n},\xi}(\pi_v,\Pi_t)) 
\otimes_{\OC_{L_n}} \overline \Fm_l$$
depends only on the modulo $l$ reduction of $\pi_v$ and $\xi$.
\end{thm}

\begin{proof}
As explained before it suffices to prove that the modulo $l$ reduction of the free quotient
of the inductive limit of the $H^i(\sh_{I^v(n),\bar s_v},j^{=tg}_! HT_{\OC_{L_n},\xi}(\pi_v,\Pi_t)) \otimes_{\OC_{L_n}} \overline \Qm_l$ 
depends only on the modulo $l$ reduction of $\pi_v$ and $\xi$.

We proved this is the case for the intermediate extensions. Then the result follows from the
precise descriptions given in the propositions \ref{prop-hip} and \ref{prop-hic} which, in short, explain
that the cohomology groups of the extension by zero of Harris-Taylor local systems is the
same as those of their intermediate extensions except for the coefficient multiplicities
$m_{s,t}(r,i)$ and $n_{s,t}(r,i)$ which change where the various contributions appear.

\end{proof}

%
%
%
%

\bibliographystyle{plain}
\bibliography{bib-ok}
\end{document}